\documentclass[12pt]{amsart}
\usepackage{amsmath,amssymb,amsfonts,amsthm,amsopn}
\usepackage{graphicx,tikz}
\usepackage[all]{xy}
\usepackage{amsmath}
\usepackage{multirow, longtable, makecell, caption, array}
\usepackage{amssymb}
\usepackage{mathtools}
\usepackage{pb-diagram}
\usepackage{cellspace}

\theoremstyle{plain}
\newtheorem{theorem}{Theorem}
\newtheorem{proposition}{Proposition}
\newtheorem{lemma}{Lemma}
\newtheorem{corollary}{Corollary}
\theoremstyle{definition}
\newtheorem{definition}{Definition}
\theoremstyle{remark}
\newtheorem{remark}{Remark}

\def\Bnorm#1{{ \| #1 \|_\Bsp }}
\def\NSPB#1{{\left (#1,\| \ebbes \|_{#1} \right )}}

\newcommand{\Asp}{{\boldsymbol A}}     
\newcommand{\AspN}{(\Asp, \, \|\ebbes\|_\Asp)}     
\newcommand{\ebbes}{\mbox{$\,\cdot\,$}}     
\newcommand{\Bsp}{{\boldsymbol B}}     
\newcommand{\BspN}{(\Bsp, \, \|\ebbes\|_\Bsp)}     
\newcommand\CG{{\Csp(\cG)}}     
\newcommand{\Csp}{{\boldsymbol C}}     
\newcommand{\cG}{\mathscr{G}}     
\newcommand{\COG}{{\COsp(G)}}     
\newcommand{\COsp}{{\Csp_{\negthinspace 0}}}     
\newcommand\COGN{{\big( \COsp(G), \, \|\ebbes\|_\infty \big)}}     
\newcommand{\COPG}{{\COPsp(G)}}     
\newcommand{\COPsp}{{\Csp'_{\negthinspace 0}}}     
\newcommand\COPGN{{\big( \COPsp(G), \, \|\ebbes\|_\COPsp \big)}}     
\newcommand{\CORd}{{\COsp(\Rst^d)}}     
\newcommand{\Rst}{{\mathbb R}}     
\newcommand{\CbG}{{\Cbsp(G)}}     
\newcommand{\Cbsp}{{\Csp_{\negthinspace b}}}     
\newcommand{\CcG}{{\Ccsp(G)}}     
\newcommand{\Ccsp}{{\Csp_{\negthinspace c}}}     
\newcommand{\CcRd}{{\Ccsp(\Rst^d)}}     
\newcommand{\DPhi}{{\operatorname {D}}_\Phi}     
\newcommand\DPhimu{{\DPhi \mu}}     
\newcommand{\DPsi}{{\operatorname{D}_\Psi}}     
\newcommand\DPsimu{{\DPsi \mu }}     
\newcommand{\DRd}{{\Dcsp(\Rst^d)}}     
\newcommand{\Dcsp}{{\boldsymbol{\mathcal D}}}     
\newcommand{\FF}{{\mathcal{F}}}     
\newcommand\FLi{{\mathcal F}{\negthinspace \Lisp}}     
\newcommand{\Lisp}{{\Lsp^1}}     
\newcommand{\FLiRd}{{ \FLi(\Rdst) }}     
\newcommand{\Rdst}{{{\Rst^d}}}     
\newcommand\FLiRdN{\big( \FLiRd, \, \|\ebbes\|_{\FLisp} \big)}     
\newcommand{\FLisp}{{ {\mathcal F} \negthinspace \Lisp}}     
\newcommand\FLispN{\big( \FLiRd, \, \|\ebbes\|_{\FLisp} \big)}     
\newcommand\FLpsp{{\FF \negthinspace \Lpsp}}     
\newcommand{\Lpsp}{{\Lsp^p}}     
\newcommand{\FT}{{\operatorname{{\mathcal F}}}}     
\newcommand\HsRd{{{\mathcal H}_s (\Rdst)}}
\newcommand{\LiG}{{\Lisp(G)}}     
\newcommand{\LiGN}{\big( \LiG, \, \|\ebbes\|_1 \big)}     
\newcommand{\LiRd}{{\Lisp \nth (\Rst^d)}}     
\newcommand\nth{\negthinspace}     
\newcommand{\LiRdN}{\big( \LiRd, \, \|\ebbes\|_1 \big)}     
\newcommand\Liloc{ {\Lsp^1_{ \negthinspace \it{loc}} } }     
\newcommand{\Lsp}{{\boldsymbol L}}     
\newcommand{\Linsp}{{\Lsp^\infty}}     
\newcommand{\LpG}{{\Lpsp(G)}}     
\newcommand{\LpGN}{\big( \LpG, \, \|\ebbes\|_p \big)}     
\newcommand{\LtG}{{\Ltsp(G)}}     
\newcommand{\Ltsp}{{\Lsp^2}}     
\newcommand{\LtR}{{\Ltsp(\Rst)}}     
\newcommand{\LtRN}{\big( \LtR, \, \|\ebbes\|_2 \big)}     
\newcommand{\LtRd}{{\Ltsp(\Rst^d)}}     
\newcommand{\LtRdN}{\big( \LtRd, \, \|\ebbes\|_2 \big)}     
\newcommand{\MG}{{\Mbsp(\cG)}}     
\newcommand{\Mbsp}{{\Msp_{\negthinspace b}}}     
\newcommand\MGN{{(\Mbsp(G), \| \ebbes \|_\Mbsp )}}     
\newcommand{\MRd}{{ \Msp{(\Rdst)}}}     
\newcommand{\Msp}{{\boldsymbol M}}     
\newcommand{\MbG}{{\Msp(G)}}     
\newcommand\MRdN{{(\Msp(\Rst^d), \| \ebbes \|_\Msp )}}     
\newcommand{\MdG}{{\Mdsp(G)}}
\newcommand{\Mdsp}{{\Msp_{\negthinspace d}}}     

\newcommand\MdGN{{(\MdG, \| \ebbes \|_\Mbsp )}}
\newcommand{\Nst}{{\mathbb N}}     
\newcommand\Phifam{{ \Phi = (\phi_j)_{j \in J} }}     
\newcommand\Psifam{{ \Psi = (\psi_i)_{i \in I} }}     
\newcommand{\Rdsth}{{\widehat{\Rst}^d}}     
\newcommand{\SOG}{{\SOsp(G)}}     
\newcommand{\SOsp}{{\Ssp_{\negthinspace 0}}}     
\newcommand{\SOGN}{\big( \SOG, \|\ebbes\|_\SOsp \big)}     
\newcommand{\SOGTrG}{{ (\SOsp,\Ltsp,\SOPsp)(G) }}     
\newcommand{\SOPsp}{{\Ssp_{\negthinspace 0}'}}     
\newcommand{\SOPG}{{\SOPsp(G)}}     
\newcommand{\Ssp}{{\boldsymbol S}}     
\newcommand{\SORd}{{\SOsp(\Rst^d)}}     
\newcommand{\ScPsp}{{\Scsp'}}     
\newcommand{\Scsp}{{\boldsymbol{\mathcal S}}}     
\newcommand{\ScRd}{{\Scsp(\Rst^d)}}     
\newcommand{\Sp}{\operatorname{Sp}}     
\newcommand{\SpPsi}{\Sp_\Psi \nth}     
\newcommand\SpPsif{{\SpPsi \nth f}}     
\newcommand{\Strho}{{\operatorname{St}_{\negthinspace \rho}}}     
\newcommand\TFRd{{{\Rdst\,{\times}\,\Rdsth}}}     
\newcommand\UUG{{ U \in \UnbG}}     
\newcommand\UnbG{{ \mathcal{U}(e)}}     
\newcommand{\WCOliG}{\Wsp(\COsp,\lisp)(G)}     
\newcommand{\Wsp}{{\boldsymbol W}}     
\newcommand{\lisp}{{\lsp^1}}     
\newcommand{\WCOliGN}{{\big( \WCOliG, \, \|\ebbes\|_\Wsp \big)}}     
\newcommand{\WCOliRd}{\Wsp(\COsp,\lisp)(\Rdst) }     
\newcommand{\WCOlisp}{\Wsp(\COsp,\lisp)}     
\newcommand\WFLili{{\Wsp(\FLi,\lisp)}}     
\newcommand\WFLiliRd{{\Wsp(\FLi,\lisp)(\Rdst)}}     
\newcommand\WFLiliRdN{{(\WFLiliRd, \|\ebbes\|_{\WFLili})}}     
\newcommand\WLilin{{\Wsp(\Lisp,\linsp)}}     
\newcommand{\linsp}{{\lsp^\infty}}     
\newcommand\WLinfli{{ \Wsp(\Linsp,\lisp) }}     
\newcommand\WMlinf{ \Wsp(\Msp,\linsp)}     
\newcommand{\WspN}{(\Wsp, \, \|\ebbes\|_\Wsp)}     
\newcommand{\WG}{{\Wsp(G)}}
\newcommand{\Zdst}{{\Zst^d}}     
\newcommand{\Zst}{{\mathbb Z}}     
\newcommand{\alinI}{{{\alpha{\in}I}}}     
\newcommand\boxcar{{{\bf 1}_{[-1/2,1/2]}}}     
\newcommand{\cD}{\mathscr{D}}     
\newcommand\cGd{{\widehat{\cG}}}     
\newcommand\capS{{\operatorname{cap}_S}}     
\newcommand{\checkm}{{^\checkmark}}     
\newcommand{\delo}{\delta > 0}     
\newcommand{\delx}{\delta_x}     
\newcommand{\diam}{\operatorname{diam}}     
\newcommand\epso{{ \varepsilon > 0 }}     
\newcommand\fBN{{ \|f\|_\Bsp}}     
\newcommand\fchk{{f^\checkmark}}     
\newcommand\finB{{f \in \Bsp}}     
\newcommand\hatf{{\widehat{f}}}     
\newcommand\hkr{\hookrightarrow}     
\newcommand{\infnorm}[1]{{\lVert #1 \rVert_\infty}}     
\newcommand{\intG}{\int_{G}}     
\newcommand{\inv}{^{-1}}     
\newcommand{\lainLa}{{\lambda{\in}\Lambda}}     
\newcommand\lato{{ \lambda = (t,\omega) }}     
\newcommand\limPsitoz{{\lim_{|\Psi| \to 0}}}     
\newcommand\limal{{\lim_{\alpha \to \infty} \, }}     
\newcommand\limalinf{{ \lim_{\alpha \to \infty}}}     
\newcommand{\linfnorm}[1]{{\lVert #1 \rVert_{\infty}}}     
\newcommand{\linorm}[1]{{\lVert #1 \rVert_1}}     
\newcommand{\lsp}{{\boldsymbol\ell}}     
\newcommand\lqsp{{ \lsp^q}}     
\newcommand\muMN{ \| \mu \|_\Msp }     
\newcommand\mual{{\mu_\alpha}}     
\newcommand{\mualiI}{(\mu_\alpha)_\alinI}     
\newcommand{\nnth}{{ \negthinspace \: \negthinspace }}     
\def\normta#1#2{{  \| {#1}   \|_{#2} \, }}
\newcommand{\ofp}[1]{{\slp{#1}\srp}}     
\newcommand{\slp}{{{\raise 0.5pt \hbox{\footnotesize $($}}}}     
\newcommand{\srp}{{{\raise 0.5pt \hbox{\footnotesize $)$}}}}     
\newcommand\psii{{\psi_i}}     

\def\pxi{{T_{x_i}p}}  
\newcommand\rhobul{{\bullet_{\rho}}}     
\newcommand\sPsi{|\Psi|}     
\newcommand\sPsitoz{{\, \sPsi \to 0 }}     
\def\shahf{{\makebox[2.3ex][s]{$\sqcup$\hspace{-0.15em}\hfill $\sqcup$}\, }}
\newcommand\siScP{{ \sigma \in \ScPsp }}     
\newcommand\sumiF{{\sum_{i \in F}}}     
\newcommand{\sumiI}{\sum_{i\in I}}     
\newcommand\sumkinf{\sum_{k=1}^\infty}     
\newcommand{\sumniinf}{\sum_{n=1}^{\infty}}     
\newcommand\supiI{{\sup_{i \in I}}}     
\newcommand{\supp}{\operatorname{supp}}     
\newcommand\suth{{ \, | \, } }     
\newcommand\tblue{\textcolor{blue}}     
\newcommand\tred{\textcolor{red}}     
\newcommand{\vareps}{\varepsilon}     
\newcommand\veps{{\varepsilon}}     
\newcommand{\wdash}{{ w^* \negthinspace \mbox{-}}}     
\newcommand\wst{ w^{*} }     
\newcommand\wstd{{w^* \negthinspace -}}     
\newcommand\wstlim{{ \wdash \lim \, }}     
\newcommand\wstlimal{{\wstlim_{\alpha \to \infty} \, }}     
\newcommand\xalI{{(x_\alpha)_{\alpha \in I}}}     
\newcommand\xiiI{{(x_i)_{i \in I}}}     

\def\citeX{\cite}
\def\tred{}
\def\tblue{}

\def\cG{G} %
\def\cD{\mathscr{D}}
\def\cD{D}
\def\MG{{\Msp(\cG)}}  
\def\MGN{{(\MG, \| \ebbes\|_\Msp)}}   

\def\MbG{\MG}

\def\Mbnorm#1{{\|#1\|_\Msp}}

\def\citeX{\cite}

\begin{document}


\vspace{3mm}
\centerline{\large  Homogeneous Banach spaces as
Banach convolution modules over $\MG$}
\vspace{3mm}
\centerline{ \large Hans G. Feichtinger\footnote{Address: Fac. Math.,
 University Vienna, Oskar-Morgenstern-Platz 1, 1090 Wien, AUSTRIA;
\newline Acoustics Research Institute, Austrian Academy of Sciences, Wohllebengasse 12-14, 1040 Vienna.\newline
{\tt ORCID 0000-0002-9927-0742}
}
}
\vspace{4mm}

This paper is supposed to form a keystone towards a new and
alternative approach to Fourier Analysis over LCA (locally compact Abelian) groups $\cG$. Already in an earlier paper the author has shown
that one can introduce convolution and the Fourier-Stieltjes transform
on $\MGN$, the space of bounded measures (viewed as a space of linear functionals) in an elementary fashion over $\Rdst$.

Bounded uniform partitions of unity (BUPUs) are easily constructed
in the Euclidean setting (by dilation).
Moving on to  general LCA groups it becomes an interesting
challenge to find ways to construct arbitrary {\it fine}  BUPUs,
ideally without the use of structure theory, the existence of a Haar measure
and even Lebesgue integration.

This article provides such a construction and demonstrates
how it can be used in order to show that any so-called
{\it homogeneous  Banach space} $\BspN$ on $\cG$, such as
$\LpGN$, for $ 1 \leq p < \infty$, or the Fourier
Stieltjes algebra $ \FT \MbG$, and in particular any {\it Segal algebra}
is a {\it Banach convolution module} over $\MGN$ in a natural way.

Via the Haar measure we can then identify $\LiGN$ with the closure
(of the embedded version) of $\CcG$, the space of continuous functions
with compact support,  in $\MGN$, and show that these homogeneous
Banach spaces are {\it essential} $\LiG$-modules. Thus in particular
the approximate units act properly as one might expect and
converge strongly to the identity operator.

The approach is in the spirit
of Hans Reiter, avoiding the use of structure theory for LCA groups
and the usual techniques of vector-valued integration via duality.
The ultimate (still far)  goal of this approach is to provide a new and
elementary approach towards the (extended)
Fourier transform in the setting of
 the so-called {\it Banach Gelfand triple}
$\SOGTrG$, based on the Segal algebra $\SOG$. This direction will be pursued
in subsequent papers.

\vspace{6mm}

\section{Introduction} \label{secintro}

Let us begin with the observation that the usual approach to
Harmonic Analysis over locally compact Abelian (LCA) groups $\cG$
(see for example \citeX{re68}, \citeX{rest00}, \citeX{fo95}, \citeX{de02}) starts with a description of the Lebesgue space $\LiGN$, which
turns out to be a Banach algebra with respect to convolution. Based on the
description of the Fourier transform as an integral transform the traditional
approach continues with the  demonstration of the fact  that the
Fourier transform turns convolution into pointwise multiplication
(the so-called convolution theorem).
This result describes one of the crucial properties of the Fourier transform,
and the Lebesgue space appears to be very natural and best possible domain,
because it allows to describe the convolution product of two functions (more
precisely of equivalence classes of measurable functions) in the pointwise
(almost everywhere) sense, combined with the corresponding norm estimate
$$ \normta {f \ast g} {\Lisp} \leq \normta {f } {\Lisp} \normta { g} {\Lisp}, \quad f,g \in \LiG. $$
It is also plausible that $\LiGN$ is considered as the natural
domain for the Fourier transform, because for any {\it character} $\chi \in \cGd$ the integral
\begin{equation} \label{defFourier04}
 \hatf(\chi) = \intG  f(x) \overline{\chi(x)} dx
\end{equation}
exists in the Lebesgue sense (for one and then for any $\chi \in \cGd$)
if and only if $f \in \LiG$. In a similar way it appears as a natural
restriction to assume that $\hatf$ belongs to $\Lisp(\cGd)$ if one
wants to obtain $f(x)$ back (again via the usual integral formula
describing the inverse Fourier transform) from $\hatf$.
The range of the Fourier transform is denoted
by $\FLiRdN$. It is a Banach algebra with respect to pointwise
multiplication, hence called the {\it Fourier algebra}, with
respect to the norm $\normta{\hatf} {\FLisp} := \normta {f} {\Lisp}$.

While technically demanding, this approach based on measure theory
allows to formulate and answer interesting mathematical questions
(e.g. about the almost everywhere convergence of Fourier series),
but it does {\it not reveal the relevance of convolution for applications}.
The situation is different when
moving on to tempered distributions, which have become the key tool
for the treatment of PDEs. But in order to make use of these tools
it is required to first study to some extent the Schwartz space $\ScRd$,
a nuclear  Fr\'echet space with a countable system of seminorms involving
differentiation. For general LCA groups one can define the Schwartz-Bruhat space via structure theory, but it is even more complicated and very difficult to use.


Recalling the fact that engineers learn about the concept of  {\it convolution}
in their introductory courses on translation-invariant linear systems (TILS) this author has so far developed an approach to convolution (for bounded measures) which is based on the isometric one-to-one correspondence between linear functionals on $\COGN$ (we call them bounded measures and use the symbol
$\MGN$) and bounded linear operators commuting with translations.
Obviously the space  $\COGN$ of continuous, complex-valued functions
vanishing at infinity form a Banach space (even a pointwise algebra) if endowed with the sup-norm, and $\CcG$ (compactly supported functions) are dense in $\COGN$. It is also invariant under translations, defined as usual by
\begin{equation} \label{transldef04}
[T_z f](y) = f(y-z), \quad y,z \in \cG.
\end{equation}
Any such TILS can be identified with a moving average resp.\ a convolution operator by a uniquely determined bounded measure  $\mu \in \MGN = \COPGN$. This isometric identification allows to transfer the
{\it composition structure of linear operators} to the corresponding bounded
measures, and call it {\it convolution}. Of course this viewpoint is compatible with the usual approach (see \citeX{fo95}, p.46). As it turns out, that it is the unique $\wstd$continuous extension of the 
identification of translation operators $T_x$ with the corresponding Dirac
measures $\delta_x \in \MbG$. In this way $\COGN$ is a Banach module over
$\MGN$ with respect to convolution. Details are given in \citeX{fe17} (and in the Lecture Notes for the ETH course, see {\tt www.nuhag.eu/ETH20}).

The realization of this correspondence makes use of so-called BUPUs, i.e.
bounded uniform partitions of unity. They allow to decompose every $\mu \in \MG$ into an absolutely convergent sum of well localized measures, which
among others allows the extension of the action of $\mu \in \COPG$ to
all of $\CbG$, the continuous, bounded functions on $\cG$ (also endowed
with the sup-norm). In this way it is possible to define the Fourier-Stieltjes
transform of bounded measures and derive the convolution theorem before even
discussing the existence of a Haar measure or the necessary Lebesgue integration theory required in order to study everything in the $\Lisp$-context.

The goal of the present manuscript  is to provide an important step towards a description of the (generalized) Fourier transform over LCA groups along the lines of the approach described above.  This author is convinced that the appropriate setting is that of the Banach Gelfand Triple $\SOGTrG$, consisting of the Segal algebra $\SOG$, which can be defined on arbitrary LCA groups, its
dual space $\SOPG$, the space of so-called mild distributions, and in the
middle the Hilbert space $\LtG$ (defined as the completion of $\SOG$ with
respect to usual scalar product).

While such an approach can be realized easily in the context of $\cG = \Rdst$, the Euclidean setting, making use of special ingredients available
in this context, notably the existence of a Fourier invariant Gaussian function and dilation operators, which among others allow to create arbitrary fine
BUPUs in a natural fashion, it is not so obvious whether and how one can
obtain such BUPUs in the context of an abstract LCA group. Moreover, many important convolution relations make use of the fact that convolution operators induced by bounded measures act also boundedly on a large variety of Banach
spaces of functions over the group $\cG$, e.g.\ on the usual spaces $\LpGN$,
or the Fourier algebra $\FLisp(G)$ and (hence) on $\SOGN$. We will provide a relatively simple construction of such arbitrary fine BUPUs, avoiding the use of structure theory of LCA groups, and derive similar results making use of
these BUPUs.

The natural setting for the realization of such a general statement is the
setting of {\it homogeneous Banach spaces} (HBS) (in the sense of Y.~Katznelson), which are isometrically translation invariant by assumption. The family of {\it Segal algebras} (in the sense of H.~Reiter) is an interesting subfamily of this class of Banach spaces of locally integrable functions over $\cG$. The second main result of this paper will deal with
such Banach spaces and will demonstrate that any such HBS $\BspN$
is actually a Banach module over $\MGN$ (hence over $\LiGN$) 
with respect to convolution.

\vspace{2mm}
{\bf The paper is organized in the following way.}
First we discuss several
variations of the concept of a BUPU in Section \ref{secBUPUs}, a {\it bounded uniform partition of unity} and explain their mutual relationship. We also provide a few historical comments on their use in the literature.

In Section \ref{secCAPBUPU} the existence of arbitrary fine BUPUs is established as our first main result. 
Instead of the Haar measure we use a kind of coarse measurement of the
size of sets, called a {\it capacity} (with respect to a sufficiently small
reference set). This provides the basis for our key results,
without making use of the structure theory for LCA groups. 
Subsequently it is shown in 
Section   \ref{OpBUPUs} how to make use of such  BUPUs. 
In Section \ref{secWG} we also discuss various  characterizations of  the
{\it Wiener Algebra} $\Wsp = \WCOliG$ and its dual via BUPUs.

In Section \ref{secHBS} our second main result is shown:
any homogeneous Banach space (in the sense of Y.~Katznelson)
is a Banach   module over $\MGN$ with respect to convolution.
In fact, we formulate an even more general abstract approach
based on isometric, strongly continuous representations of the
group $\cG$ on  an arbitrary Banach space $\BspN$.
The   approach is based on the methods developed in \citeX{fe17} 
and makes use of a constructive way of approximating  
bounded measures by discrete measures in the $\wstd$sense. The technical
realization of this second main result is based on the completeness
of Banach spaces, which also implies that (bounded) {\it Cauchy nets}
are actually convergent in any Banach space. The necessary
background is described in the  Section \ref{secFA}. The approach
also permits to demonstrate that the $\wstd$convergence of bounded
and tight nets leads to strong operator convergence of the
corresponding convolution operators  (Thm. \ref{wsttightconv}). 

Only then the existence of the Haar measure is invoked in
order to define $\LiGN$ as a subspace of $\MGN$, namely as 
closure of $\CcG$. In this sense Section \ref{secHBS}
characterizes the usual {\it integrated
group representation} as the restriction of the established
module structure over $\MG$. In particular it is shown that  any
homogeneous Banach space is also an {\it essential}
Banach modules over $\LiGN$.

\section{Different Types of Uniform Partitions} \label{secBUPUs}

It is the purpose of this section to compare various notions of uniform
partitions of unity in the context of harmonic analysis over LCA groups.
It is easy to construct arbitrary fine BUPUs of a given degree of smoothness on $\Rst$ by just applying appropriate dilations
to the basis of B-splines of sufficiently high order (or even infinitely
differentiable)
which are obtained as translations along the integer lattice $\Zst$ of the
convolution powers of the indicator function $\boxcar$.
For B-splines of order $3$ (four-fold convolution power) one obtains
a Riesz basis for the cubic spline function in $\LtRN$.
Via tensor products the same can be achieved on $\Rdst$ for $d \geq 2$.

\begin{figure}
\label{BUPUqreg4.jpg}
\begin{center} \parbox[t]{13cm}{
\includegraphics
[width=15cm,height= 7cm]
{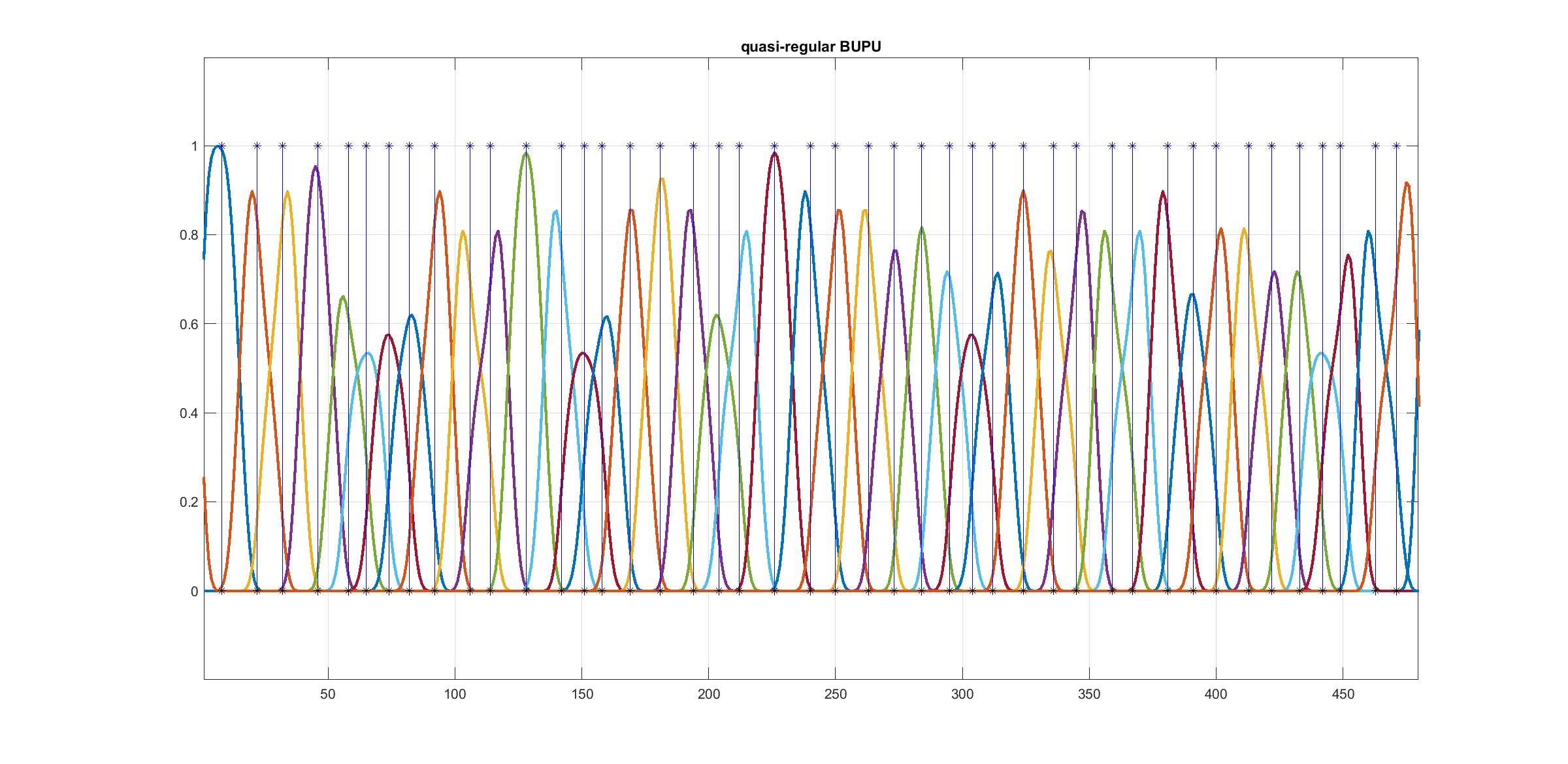}
} \end{center}
\caption{A typical BUPU, illustrating Definition \ref{BUPU-pos},
obtained by positioning shifted bump functions
at well-spread locations (marked with $*$) on the line,
followed by a division through the sum of those bump functions.}
\end{figure}

In contrast,  it is not
at all clear how to provide similar families of functions in a situation where there is a lack of fine lattices (and corresponding fundamental domains) and without having an appropriate group of automorphism on the underlying group (replacing dilations).

It is  our main goal in this section  to demonstrate that the existence of
such BUPUs (using a suitable version of the BUPU concept) can be guaranteed,
using relatively elementary arguments. Thus we will {\it not rely on the existence of a Haar measure} on such a group $\cG$, although that would make the proof a little bit shorter. 

The notion of a  \tblue{\it uniform partitions of unity}
appears in different papers, usually similar
in spirit and  mostly referring to a {\it uniform size of the constituents
of the partition of unity}. In order to compare the different possible
concepts  let us recall the corresponding definitions. The concept of choice for {\it this article} is that of BUPUs as introduced in \citeX{fe83} (i.e.\ Def. \ref{BUPU-gen0} below). It has been used
regularly since then (e.g.\ in \cite{fezi98}, section 3.2.2 and 
many other papers by the author).


The following situation will be the most simply and still most
useful for our purpose. It is a simplification of the concept
of BUPUs as introduced in \cite{fe83} (given below). Since it is
natural to formulate these results in the context of locally
compact groups $\cG$ we formulate the next definition by
writing the group operation in a multiplicative way.
\begin{definition} \label{BUPU-pos}
Given some neighborhood  $\UUG$ of the identity of a
locally compact group $\cG$, a  {\it non-negative U-BUPU}, a  so-called (left)
\tblue{{\bf b}ounded {\bf u}niform {\bf p}artition of {\bf u}nity of size $U$}
is a family $\Psi = (\psi_i)_{i \in I}$
of continuous, {\it non-negative  functions} on $\cG$
satisfying the following conditions (we write the group law multiplicatively
here):
\begin{enumerate}
\item For some family $\xiiI$ in $\cG$  one has:
$ \supp(\psi_i) \subseteq  x_i  U$  for all $i \in I$;
\item The family    $(x_i U)_{i \in I}$ satisfies the
{\it bounded overlap property} (BOP),     the number of intersecting neighbors  is uniformly bounded (with respect to $i \in I$):
$$\supiI \#\{j\suth x_i U \cap  x_j U \neq \emptyset \} \leq  B_0 < \infty;$$
\item  $\sum_{i \in I}  \psi_i(x) \equiv  1$ on $\cG$.
\end{enumerate}
\end{definition}

\begin{remark}
The continuity of the constituents $\psii$ of the BUPU require some
overlap of their supports which is illustrated by Fig.1.
 On the other hand we can apply bounded
measures (i.e.\ linear functionals on $\COGN$) only on continuous
functions with compact support, and not on the indicator functions
of compact sets. While one might think of a fine partition of the
group (e.g.\ translates of a {\it fundamental domain}) we do not
want to make use of this more measure-theoretic setting.
\end{remark}

\begin{remark}  Observe that the bounded overlap property implies
that the  sum in (3) is finite sum (with at most $B_0$ non-zero terms
for each $x \in \cG$). We call $B_0$ the ``overlap bound'' of the
family $(x_i U)$.

The non-negativity of the functions $\psii$ implies by (3)
that   $\supiI \infnorm{\psii} \leq 1$, i.e. the family
$\Psi$ is bounded in $\COGN$ (the space of continuous complex-valued
functions vanishing at infinity, endowed with the sup-norm).
\end{remark}


For the characterization of general Wiener amalgam spaces of the form
$\Wsp(\Bsp,\lqsp)$, for example,  (with a {\it local component} $\BspN$,
more general than just another $\Lpsp$-space, but something like
$\BspN = \FLispN$ or $\FLpsp$) it is important to assume boundedness of
the family $\Psi$ in some Banach algebra (with respect to
pointwise multiplication), contained in the multiplier algebra
of $\BspN$. We assume in that case that $\AspN \hkr \COGN$
(continuous embedding).
On the other hand, the non-negativity is not
required in this case. The subsequent definition of BUPUs
goes back to \citeX{fe83}.
\begin{definition} \label{BUPU-gen0}
Given $\UUG$, a family $\Psi = (\psii)_{i \in I}$  is a
 {BUPU},  a {\bf b}ounded {\bf u}niform {\bf p}artition of {\bf u}nity
(of size $U$) in the Banach algebra $\AspN$ if one has:
\begin{enumerate}
\item 
There exists a family $\xiiI$ in $\cG$ such that
$ \supp(\psi_i) \subseteq  x_i U$
for all $i \in I$;
\item The family $\Psi$ is bounded in $\AspN$,
i.e.\ $sup_{i \in I} \|   \psii \|_\Asp \leq C_\Psi < \infty$;
\item  There exists $B_0 > 0$ such that
$ \# \{ j \suth x_i  U \cap  x_j  U \neq  \emptyset \}  \leq  B_0;$
\item  $\sum_{i \in I}  \psi_i(x) \equiv  1$ on $\cG$.
\end{enumerate}
\end{definition}

The constant $C_\Psi = C(\Psi,\Asp)$ is  called the norm of the family $\Psi$ in $\AspN$, and $B_0$ is the {\it overlapping constant} of the family.
The family $X = \xiiI$ is called the {\it family of centers of the} BUPU $\Psifam$.

For the case of a metric group $\cG$ we can use balls of radius
$\delo$ as a basis of neighborhoods and thus it is natural to
write $|\Psi| \leq \delta$ if one has $\supp(\psii) \subseteq B_\delta(x_i)$
for $i \in I$. In this case we call $\Psi$ a $\delta-BUPU$, or
a BUPU of size $\delta$.

\begin{remark}
The usefulness of BUPUs with different specific properties arises in
various contexts. Let us mention  only a few of them here.

Sometimes it is enough to have {\it some} BUPUs, maybe bounded in a suitable
Banach algebra $\AspN$, e.g. for the construction of Wiener amalgam spaces,
such as $\WFLiliRdN$. In fact, for such spaces one can show that different
BUPUs define the same Wiener amalgam spaces with equivalent norms.
But BUPUs are not just helpful in defining new function spaces, they
play an important role in the alternative approach to convolution for
the measure algebra $\MGN$ as presented in \citeX{fe17}. The decomposition
of $\mu \in \MG$ as an absolutely convergent sum of measures with small
support allows to take a crucial step in the isometric isomorphism
between $\MGN$ and the Banach algebra (under composition) of bounded
linear operators on $\COGN$ which commute with translation, the so-called
TILS ({\it translation-invariant linear systems}, as they are called in engineering books).

For the work on {\it coorbit theory}  developed jointly with K.~Gr\"ochenig
as well as the closely related work on the {\it irregular sampling} it is
important to be able to have BUPUs which are centered at a given $\delta-$dense
subset of $\Rdst$ (or a LCA group), see \citeX{fegr89} or
\citeX{fegr89-2}, see also \citeX{fe91-1} and \citeX{fe92} and \citeX{fegr92-3}.

For the {\it current paper} the existence of {\it arbitrary fine} BUPUs over general LC groups will be crucial. Currently it is not clear whether one can find UPUs (in the sense of \citeX{lemu91})
 of size $U$ (meaning with $\supp(\varphi) \subseteq U$, for a given
neighborhood of the identity) in the case of general groups. Fortunately
 the concept of BUPUs  is more flexible, and it will be the first main
 result of this paper to demonstrate that one can derive the existence
 of arbitrary fine non-negative BUPUs over any given locally compact group $\cG$ using elementary considerations (reminding perhaps some readers of the
 construction of a Haar measure on $\cG$, see \citeX{we40}).

\end{remark}


For most applications so-called \tblue{\it regular BUPUs} will be sufficient (and in fact easier to handle), which are obtained as translates of a (smooth) function with compact support along some lattice  $\Lambda  \lhd \cG$. Especially over $\cG = \Rdst$ it would be natural to make use of smooth BUPUs with respect to some lattice of the form $\Lambda = {\bf A} \Zdst$, for some non-singular $d \times d$  matrix ${\bf A}$. Note that in the Euclidean
case (or for example also for stratified Lie groups) one can obtain
``arbitrary fine BUPUs'' by applying a simple dilation (or rather compression)
routine to a given BUPU. If one only needs {\it some BUPU} over $\Rdst$
it is quite natural to obtain BUPUs as translates of a single function:

\begin{definition} \label{regFL1BUPUd}
A family $\Psi = (\psi_\lambda)_{\lainLa} = (T_\lambda \psi_0)_{\lainLa} $ is
called a {\it regular (smooth) uniform partition of unity}  on $\Rdst$
in $\FLiRdN$
if it satisfies:
\begin{enumerate}\label{regBUPU1b}
\item  $\psi_0$ is compactly supported and $\widehat \psi_0 \in \LiRd$,
(resp.\  $\psi_0 \in \cD(\Rdst)$);
\item
$\sum_{\lainLa}\psi_\lambda(x)=\sum_{\lainLa}\psi_O(x-\lambda )\equiv 1$ on $\Rdst$.
\end{enumerate}
\end{definition}
Note that the finite overlap condition of support easily follows
from the properties of a lattice, and that furthermore the boundedness
of the family $\Psi$ is  an easy consequence of the isometric
translation invariance of the algebra $\AspN$ under consideration
(here $\FLiRdN$).


%

Historical note: BUPUs have been introduced (although not first used) by this name by the author in \cite{fe83} for the ``discrete'' characterization of Wiener amalgam spaces.

A slightly different approach has been taken in \citeX{lemu91}, based
on earlier work of \citeX{heno79}.

\begin{definition} \label{UPUdef1}
Let $\cG$ be a locally compact group.
A family $\Psi$ in $\CcG$  is called a UPU (a {\it uniform partition of unity})
if there exists some function $\varphi \in \CcG$\footnote{ i.e.
continuous and compactly supported, maybe satisfying some smoothness conditions.} such that, for a suitable family $(y_i)_{i \in I}$ in
$\cG$ one has
\begin{equation}\label{UPUprop}
\sumiI T_{y_i}\varphi(x)=\sumiI  \varphi(y^{-1}_i x)\equiv 1 \,
\, \mbox{on} \,\, \cG.
\end{equation}
\end{definition}
Although formally there is no BOP-property required in this case
 it is shown in \citeX{lemu91} that the family of shift parameters
 $(y_i)_{i \in I}$ is relatively separated, or equivalently, that such an UPU is in fact a BUPU of size $\supp(\varphi)$.

We can give the following characterization of {\it relatively separated
families} $\xiiI$ as they appear in the above definitions.
 For details see e.g.\ Thm. 22 in \cite{fe92}.

\begin{lemma}\label{relsep02}
For a discrete family $\xiiI$ in  $\cG$ the following properties
are equivalent:
\begin{enumerate}
\item The family is {\it relatively separated}, i.e.\ a finite union
of separated sets, i.e.\ of subfamilies $(x_j)_{j \in J}$ with the
property that $x_j V \cap x_l V = \emptyset$ for $j \neq l \in J$,
for some open set $V$ in $\cG$;
\item For any relatively compact set $W$ the family $(x_i W)_{i \in I}$
has the uniformly controlled neighbors property;
\item For any compact set $Q \subset \cG$  the number of points in $zQ$  is
controlled as follows:
$$  \sup_{z \in \cG} \#\{ i \suth x_i \in zQ \} = B(Q) < \infty. $$
\end{enumerate}
\end{lemma}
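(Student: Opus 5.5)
I would prove the equivalences cyclically, $(1)\Rightarrow(3)\Rightarrow(2)\Rightarrow(1)$, using only elementary compactness arguments and no Haar measure. Two interpretive choices are needed. The set $V$ in (1) should be a (nonempty) open neighbourhood of the identity; replacing $V$ by a smaller symmetric open neighbourhood of $e$ keeps the subfamilies separated. The ``uniformly controlled neighbors property'' in (2) should mean $\sup_{i}\#\{j \suth x_iW\cap x_jW\neq\emptyset\}<\infty$.

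\emph{Step $(1)\Rightarrow(3)$.} Let the family be the union of $N$ separated subfamilies $(x_j)_{j\in J_k}$, $k=1,\dots,N$, with open neighbourhoods $V_k$ of $e$. Put $V=\bigcap_k V_k$, and choose a symmetric open neighbourhood $V_0$ of $e$ with $V_0V_0\subseteq V$. Then every left translate $yV_0$ contains at most one point of each subfamily, because $x_j,x_l\in yV_0$ gives $x_l\in x_jV_0^{-1}V_0 \subseteq x_jV$. By compactness, $Q\subseteq \bigcup_{m=1}^M q_mV_0$ for finitely many $q_m$. Since $zQ\subseteq\bigcup_m zq_mV_0$, the set $zQ$ contains at most $NM$ points, uniformly in $z$. \textbf{Discreteness} of the family is used implicitly, so that points are counted with finite multiplicity. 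Repeated points should be excluded, or counted as belonging to different subfamilies.

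\emph{Step $(3)\Rightarrow(2)$.} Let $W$ be relatively compact. If $x_iW\cap x_jW\neq\emptyset$, then $x_j\in x_iWW^{-1}\subseteq x_i\overline{W}\,\overline{W}^{-1}$. The set $Q:=\overline{W}\,\overline{W}^{-1}$ is compact, so the number of such $j$ is at most $B(Q)$, uniformly in $i$.

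\emph{Step $(2)\Rightarrow(1)$.} This is the step I expect to be the main obstacle, since a finite partition into separated pieces must actually be produced. Fix a relatively compact symmetric open neighbourhood $W$ of $e$. By (2), the graph on $I$ with edges $\{i,j\}$ whenever $i\neq j$ and $x_iW\cap x_jW\neq\emptyset$ has degree at most $B_0-1$. A greedy colouring with $B_0$ colours then works. For finite subgraphs the greedy algorithm suffices directly. For infinite index sets I would use Zorn's lemma on partial colourings, or well-order $I$ and colour transfinitely; at each stage at most $B_0-1$ colours are forbidden, so a colour is always available. Each colour class is a subfamily with $x_jW\cap x_lW=\emptyset$ for $j\neq l$, so (1) holds with $V=W$.

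Throughout I would be careful with multiplicative, left-translate conventions, so that the argument remains valid for non-abelian locally compact groups as in Definition~\ref{BUPU-pos}.
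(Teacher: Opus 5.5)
Your proof is correct. The paper gives no argument for this lemma; it only refers to Thm.~22 of \cite{fe92}. Your cycle $(1)\Rightarrow(3)\Rightarrow(2)\Rightarrow(1)$ is therefore a self-contained alternative. It uses only compactness and the continuity of the group operations, which fits the paper's aim of avoiding the Haar measure.

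Your proof also ties in with the paper's own tools. Step $(1)\Rightarrow(3)$ gives $B(Q)\le N\cdot\operatorname{cap}_{V_0}(Q)$ in the sense of Definition~\ref{capSdef1} (taking $V_0$ relatively compact). This is the same kind of count the paper performs in the proof of Theorem~\ref{BUPULCG1}. Your pigeonhole observation, that each translate $yV_0$ contains at most one point of each separated subfamily, gives the bound directly. You therefore do not need the additivity property of Lemma~\ref{capSrules1}(4), which the paper invokes there.

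Your greedy colouring in $(2)\Rightarrow(1)$ is the graph-theoretic form of the familiar device of removing maximal $W$-separated subfamilies one after another. Either way one needs Zorn's lemma or a well-ordering when $I$ is uncountable, and at most $B_0$ subfamilies result.

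Three minor remarks. First, taking $V$ to be a neighbourhood of $e$ is not an interpretive restriction but automatic: for $v\in V$ one has $x_jVv^{-1}\cap x_lVv^{-1}=(x_jV\cap x_lV)v^{-1}$. Second, the contradiction in $(1)\Rightarrow(3)$ uses $e\in V$, through $x_l\in x_lV\cap x_jV$; you leave this implicit. Third, discreteness is never needed. Separation with $V\neq\emptyset$ already makes each subfamily injective, so your count of indices in $yV_0$ is correct even with repeated points, and (3) in turn forces local finiteness.
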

\begin{remark}
The last property can be equivalently described
as the property that the (irregular) Dirac
comb $\shahf_{\, X} := \sumiI \delta_{x_i}$
belongs to the Wiener amalgam space $\WMlinf$, which is the
dual of the Wiener algebra $\WCOliGN$. We will not pursue
this connection any further as it might confuse the readers
not familiar with the theory of Wiener amalgam spaces.
\end{remark}

Let us next remind  that the main result of
\citeX{lemu91} describes (making use of the structure theory
of locally compact groups) the existence of UPUs for
arbitrary LC groups $\cG$. However, it is not claimed
that one can find arbitrary fine UPUs in that paper. Still,
for further reference let us formulate their main result
as follows:
\begin{proposition}[Leptin/M\"uller]  \label{lemu91A}
For any locally compact group $\cG$ there exist UPUs.
\end{proposition}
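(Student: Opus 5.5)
The plan is to reduce the existence of a UPU to the existence of a \emph{tiling} of $\cG$ by left translates of a single relatively compact Borel set, and then to smooth that tiling by convolution with a compactly supported continuous function. Concretely, suppose we have a relatively compact Borel set $D \subseteq \cG$ and a family $(y_i)_{i \in I}$ with $\cG = \bigcup_{i \in I} y_i D$ a disjoint union, so that $\sum_{i\in I} {\bf 1}_{D}(y_i^{-1}x) \equiv 1$. Since we are now allowed to use the Haar measure, fix a non-negative $h \in \CcG$ with $\int_\cG h(t)\,dt = 1$, and put
$$ \varphi(x) := \int_\cG {\bf 1}_D(x t^{-1})\, h(t)\, dt , $$
which is continuous and supported in the compact set $\overline{D}\,\supp(h)$. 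Left translation commutes with this right convolution, so $\sum_i \varphi(y_i^{-1}x) = \int_\cG \sum_i {\bf 1}_D(y_i^{-1}x t^{-1}) h(t)\,dt = \int_\cG h(t)\,dt = 1$. Interchanging sum and integral is harmless: each integrand is non-negative, and only finitely many terms are non-zero for $x$ in a compact set, by Lemma \ref{relsep02}. Hence $\varphi$ and $(y_i)$ form a UPU in the sense of Definition \ref{UPUdef1}.

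It therefore remains to produce such a tiling, and I would do this in two reduction steps. First, pick a compact symmetric neighborhood $V$ of the identity and let $H = \bigcup_n V^n$ be the open, hence closed, compactly generated subgroup it generates. Choosing a set $(g_k)$ of representatives of the left cosets of $H$, a tile $D_H \subseteq H$ for $H$ with translates $(h_j)$ yields the tile $D_H$ for $\cG$ with translates $(g_k h_j)$. So it suffices to treat compactly generated groups.

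Second, for compactly generated $H$ I would invoke structure theory; this is why the proposition is attributed to Leptin/M\"uller rather than proved in the elementary spirit of the rest of the paper. In the Abelian case, which is the one relevant here, $H \cong \Rst^a \times \Zst^b \times K$ with $K$ compact. A tile is then $[0,1)^a \times \{0\} \times K$, with translates indexed by $\Zst^a \times \Zst^b$; this directly gives the classical B-spline type UPU.

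For non-Abelian $H$ I would use the Gleason--Yamabe theorem: after passing to a further open subgroup, $H$ has a compact normal subgroup $K$ with $H/K$ a Lie group. A tile for $H/K$ pulls back to a tile for $H$, because the preimage of a Borel tile is again a relatively compact Borel tile, invariant under $K$. This step is the main obstacle. A general Lie group need not contain a uniform lattice, so one cannot simply take a fundamental domain of a discrete cocompact subgroup. I would try to build the tile from the Iwasawa-type decomposition $L = K' A N$ together with tiles for the solvable part. Failing that, I would follow Leptin--M\"uller and construct $\varphi$ directly as a $K$-averaged cut-off on a coordinate chart. The Abelian case, however, is fully covered by the argument above.
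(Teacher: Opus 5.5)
There is a genuine gap: your argument proves the proposition only for locally compact \emph{abelian} groups. In that case your reductions and the structure theorem $H\cong\mathbb{R}^a\times\mathbb{Z}^b\times K$ for compactly generated LCA groups do produce a UPU. The reductions are smoothing a tile by some $h\in C_c(G)$ and passing to a compactly generated open subgroup $H$. The proposition, however, claims existence for \emph{every} locally compact group, and for non-abelian groups the decisive step is never carried out.

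Your Gleason--Yamabe reduction is correct. After it, and after passing to the (open) identity component of the Lie quotient, everything hinges on one fact: a connected Lie group $L$ can be tiled by left translates $y_jD$ of a single relatively compact Borel set of positive measure. That is exactly what you leave open.

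The Iwasawa route does not reach this as stated, for four reasons:
\begin{enumerate}
\item $L=K'AN$ exists only for semisimple $L$.
\item $K'$ is compact only when the centre is finite; in $\widetilde{SL}(2,\mathbb{R})$ it is a copy of $\mathbb{R}$.
\item The solvable radical is ignored.
\item A product decomposition is not yet a tiling by left translates of one set.
\end{enumerate}
``Following Leptin--M\"uller'' amounts to quoting the proposition itself. That is in fact all the paper does: it states the result with a reference and the remark that it rests on structure theory, and gives no proof.

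To close the gap you need an actual argument. One route:
\begin{enumerate}
\item[(a)] If $E\le L$ is closed and cocompact and $E=\bigsqcup_j e_jD_E$, then $L=\bigsqcup_j e_j(D_EC)$, where $C$ is a relatively compact Borel cross-section of $E\backslash L$.
\item[(b)] A nontrivial connected solvable Lie group $P$ surjects onto $\mathbb{R}$ or $\mathbb{T}$, since $P/\overline{[P,P]}$ is nontrivial. Tile $P$ by induction on $\dim P$. If the quotient is $\mathbb{R}$, use the split extension $P=M\rtimes\exp(\mathbb{R}X)$ with $D=D_M\exp([0,1)X)$. If it is $\mathbb{T}$, tile the kernel componentwise and apply (a).
\item[(c)] The preimage in $L$ of $Z(S)AN\subseteq S=L/\operatorname{rad}(L)$ is a closed, cocompact, solvable subgroup. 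Tile its identity component by (b), handle its countably many components by coset representatives, and finish with (a).
\end{enumerate}

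A smaller point in your smoothing step: interchanging sum and integral needs the tiling to be locally finite. This does not follow from Lemma~\ref{relsep02}, which presupposes relative separatedness. It does follow once $D$ has positive Haar measure $m(D)$. The disjoint tiles $y_iD$ meeting a compact set $C$ all lie in the compact set $C\overline{D}^{-1}\overline{D}$, so there are at most $m(C\overline{D}^{-1}\overline{D})/m(D)$ of them. Without that hypothesis the reduction is false: $D=\{0\}$ tiles $\mathbb{R}$ but yields $\varphi\equiv 0$. All the tiles you actually use have positive measure, so this is easily repaired.
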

\begin{remark} \label{UPUtoBUPU}
Using a simple compactness argument one can even rewrite the function
$\varphi$ as a finite sum $\varphi = \sum_{k=1}^K \varphi_k$
of functions with arbitrary small support
and thus derive the existence of BUPUs by translating each of them
using the same family of shift-parameters $(y_j)$. However, the
disadvantage (from our perspective) of this approach is the fact that it is heavily based on structure theory. 
\end{remark}

\begin{remark} \label{tiling}
Note that of course one even can have the situation, where
the indicator function of a relatively compact set covers
the group by translates along a discrete family $(y_j)_{j \in J}$,
without having any group structure, i.e.\ not using a lattice
(discrete subgroup) as parameter set of the shift operators.
Such a situation is known from wavelet theory, where
one obtains such coverings on the ``$ax+b$''-group. Although
the translation parameters (taken from $\Zst$) and the dilation
parameters of the form $2^k, k \in \Zst$ form discrete subgroups
of Abelian subgroups the combined ``geometric lattice'' is {\it
not a discrete subgroup} of the affine group.
\end{remark}

\subsection{Historical Notes}  \label{sechist}

There are several situations where BUPUs have
played an important role in the past. First of all the paper
introducing the general {\it Wiener amalgams} (originally called
{\it Wiener-type spaces}) in \citeX{fe83}.

Of course various forms of smooth BUPUs, such as B-spline systems
have been used already early on e.g.\ in the theory of numerical
integration. In fact, any BUPU allows to define a so-called
\tblue{\it quasi-interpolation operator} of the form
$\SpPsi f(x)  := \sum_{i \in I} f(x_i)\psii$.
Sometimes (e.g. for the BUPU obtained by B-splines of order one,
which are triangular functions) these operators interpolate
the function $f$ at the node points, but in most other
cases they just approximate a given smooth function.
Integration formulas thus allow to calculate
the integral of $\SpPsi$ in a closed form, based on the
knowledge of the sampling values $(f(x_i))$ only.

BUPUs over LC groups play a prominent role in the development
of {\it coorbit theory} put in place by the author together with
K.~Gr\"ochenig (see \citeX{gr01}).

At the heart of coorbit theory are reconstruction methods
which allow to reconstruct an abstract wavelet transform $V_gf$
defined over a locally compact group  $\cG$ (such as the Heisenberg group,
the ``$ax+b$''-group, or the shearlet group, to mention concrete
examples) from samples, just taken over a sufficiently dense,
discrete family $(x_i)_{i \in I}$ in $\cG$.  The first step
here is to establish a quasi-interpolation for $V_gf$, using the
given sampling values only. But then one observes that the
resulting function may not belong to the range of the transform
$f \mapsto V_gf$, and thus one has to project back to the
range, which can be realized by a convolution with $V_gg$.
The details are found in \citeX{fegr89} and related papers.
Of course one has to estimate the guaranteed approximation
quality for this kind of approximation, in order to have a
basis for an iterative method of reconstruction (at a geometric rate).

The intuitive similarity of the properties of those wavelet
transforms $V_gf$ with band-limited functions of two variables
(also the existence of a reproducing convolution relation in
both cases) where then inspiring the authors to deal with
the ``irregular sampling problem'', i.e.\ the problem of reconstructing
a band-limited function from irregular samples. Recall that
the regular case, i.e.\ reconstruction of a band-limited
function in $\LtRd$ (with compact support $\supp(\hatf) \subseteq  B_R(0)$)
from samples along some lattice $\Lambda$ can be guaranteed if the lattice
$\Lambda$ is fine enough, essentially making use of Poisson's formula.
In the irregular case the first generation of iterative algorithms was
based on the use of BUPUs which are fine
enough and are centered at the given sampling points (see \citeX{fe92}, \citeX{fegr92-3}).


\section{Arbitrary fine BUPUs over LC Groups}    \label{secCAPBUPU}

In this section we establish our first main result, in the context
of general locally compact groups. Since this includes many
non-commutative groups we choose the usual multiplicative notation
for the group law.

\begin{definition} \label{capSdef1}
For any fixed and relatively compact subset $S \subset G$ the
mapping $M \mapsto \capS(M)$ is defined on the collection of (relatively)
compact sets $M$ by the following rule
\begin{equation}\label{capsDef}
\tblue{ \capS(M):= \min \{ \#F \suth M \subseteq \bigcup_{i \in F} \, x_i S\} }
\end{equation}
where the minimum is taken over all finite subsets of possible translation
parameters.
\end{definition}
Note that by the fact that the interior of $S$ is non-empty and $M$ is supposed
to be compact the minimum is taken over a non-empty subset of $\Nst$.

\begin{remark}\tblue{ The term ``capacity'' originates from a similar construction, where
one measures the size of an indicator function by minimizing over all the
(typically non-negative) functions in a given function spaces, typically
a Sobolev space $\HsRd$, which dominate the indicator function
${\bf 1}_M$ of the set $M$.}

Such an interpretation is in fact possible also here:
Given a set $S$,  $\capS(M)$ can be interpreted as the
infimum over all norms in $\WLinfli(\cG)$ of functions, dominating the
indicator function  ${\bf 1}_M$. We leave it to the interested reader
to check the details.
\end{remark}

\begin{lemma} \label{capSrules1}
For any fixed and relatively compact subset $S \subset G$ the
mapping $M \mapsto \capS(M)$, defined on the collection of
compact sets $M$ has the following properties:
\begin{enumerate}
\item $\capS(S) = 1$ 
\item $\capS(zM) = \capS(M), \,\,\forall  z \in \cG$
\item The mapping $M \mapsto \capS(M)$ is subadditive in the sense
that we have for finite unions of compact sets:
\begin{equation} \nonumber
 {\capS \left(\bigcup_{k=1}^K M_k \right) \leq  \sum_{k=1}^K \capS(M_k)}
\end{equation}
\item Given any  finite collection of compact sets $M_k, k = 1,...,K$
which are $S$-{\it separated}, i.e.\ satisfying the condition that $M_k S \cap  M_j S = \emptyset$
for $k \neq j$ one has
$$  {\capS\left(\bigcup_{k=1}^K M_k \right) =  \sum_{k=1}^K \capS(M_k)}. $$
\end{enumerate}
\end{lemma}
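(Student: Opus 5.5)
The plan is to prove each of the four properties directly from Definition \ref{capSdef1}, comparing finite coverings by left translates of $S$. Property (4) is the only one that needs a genuine argument.

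For (1), the single translate $eS = S$ covers $S$, so $\capS(S) \leq 1$. Conversely, any covering of the non-empty set $S$ uses at least one translate, so $\capS(S)=1$. One small caveat: $S$ is only assumed relatively compact, so here one either allows relatively compact arguments, as the definition's parenthetical does, or works with $\overline{S}$. For (2), observe that $M \subseteq \bigcup_{i\in F} x_i S$ holds if and only if $zM \subseteq \bigcup_{i \in F} (z x_i) S$. So left multiplication by $z$ induces a bijection between admissible finite families for $M$ and for $zM$, preserving cardinality. Hence the two minima coincide. For (3), choose for each $k$ a minimal covering family $F_k$ for $M_k$. The union $\bigcup_k \{x_i \suth i\in F_k\}$ covers $\bigcup_k M_k$ and has at most $\sum_k \capS(M_k)$ elements, which gives subadditivity.

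For (4), the inequality ``$\leq$'' is just (3), so it remains to show ``$\geq$''. Take a minimal covering $\{x_i S\}_{i \in F}$ of $M=\bigcup_k M_k$. First discard every translate that does not meet $M$; this leaves a covering of $M$ that is no larger, hence still minimal. The key claim is that each remaining $x_iS$ meets exactly one $M_k$. Granting this, $F$ splits into disjoint pieces $F_1,\dots,F_K$, where $F_k$ indexes the translates meeting $M_k$. Since the translates indexed by $F_j$, $j\neq k$, miss $M_k$, each piece $\{x_iS\}_{i\in F_k}$ must cover $M_k$ by itself. Therefore $\#F = \sum_k \#F_k \geq \sum_k \capS(M_k)$, which gives ``$\geq$''.

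The main obstacle is the key claim, because it requires matching the sidedness of the separation hypothesis. Suppose $x_iS$ meets both $M_k$ and $M_j$, say $m_k = x_i s$ and $m_j = x_i s'$ with $s,s' \in S$. Then $x_i \in M_kS^{-1}\cap M_jS^{-1}$. In the commutative setting this yields $m_k s' = x_i s s' = m_j s \in M_kS \cap M_jS$, contradicting $S$-separation. For a non-commutative $G$ the natural hypothesis for left translates is $M_kS^{-1}\cap M_jS^{-1}=\emptyset$, or equivalently $M_j \cap M_k S^{-1}S = \emptyset$. This coincides with the stated condition when $S$ is symmetric, which is the case of interest, since $S$ will be a small symmetric neighbourhood of $e$. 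I would therefore prove (4) in that form and remark that for abelian groups, or symmetric $S$, it is exactly the stated condition.
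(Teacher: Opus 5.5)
Your proposal is correct and follows essentially the paper's route: the translation bijection for (2), the union of coverings for (3), and for (4) splitting a minimal covering of $\bigcup_k M_k$ into coverings of the individual $M_k$ (the paper argues that each piece is itself minimal, whereas you combine $\#F_k \geq \capS(M_k)$ with (3), which amounts to the same thing). Your sidedness caveat is justified: the paper's indirect step actually only gives $z \in M_1S^{-1}\cap M_2S^{-1}$, so it tacitly uses the symmetry of $S$ that is assumed in the application in Theorem~\ref{BUPULCG1}, and without symmetry (4) genuinely fails for non-abelian groups, e.g.\ in the discrete free group on $u,v$ with $S=\{e,u,v\}$, $M_1=\{e\}$, $M_2=\{u^{-1}v\}$ one has $M_1S\cap M_2S=\emptyset$ but $u^{-1}S \supseteq M_1\cup M_2$.
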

\begin{proof}
Claim (1) is obvious, and the  translation invariance (2) follows from
$$ M \subseteq \bigcup_{i \in F} \, x_i S \quad \Leftrightarrow
\quad zM \subseteq \bigcup_{i \in F} \, (z x_i) S, \quad \forall z \in \cG.$$
Thus any covering of $M$ has a corresponding
covering  of equal cardinality for $zM$.

The subadditivity property (3) is easy to check, since the combination
of all the translates needed to cover all the sets $M_i, i \in F$,
is obviously constituting a covering of their union.

Finally we check for the additivity property (4). 
Given a minimal covering of $\bigcup_{k=1}^K M_k$,
using a set of translates of the form $y_i S, i = 1,...  L$, we argue
that each of the translates will be relevant for exactly
one of the constituting sets $M_k$, $ 1 \leq k \leq K$, since by the
minimality we can  limit our consideration to translates
of the form $zS$ which intersect at least one of the sets $M_k$.

Using an indirect argument we assume that
$ zS $ has non-trivial intersection with, say, $x_1 S$ and $x_2 S$.
Then we have $ zs = x_1 s_1 = x_2 s_2$ for some elements $s,s_1,s_2 \in S$.
But then we have  $z \in M_1 S \cap M_2 S = \emptyset$, in
contradiction to the assumption.
Thus for every index $k$ the collection of sets $s_i S$ with $i \in I_k$
given by
$$  I_k := \{ i \in F \suth  M_k \cap s_i S \neq \emptyset \} $$
describes a   covering of the set $M_k$.

It is a minimal covering, because if there
was another covering of the set $M_k$ with fewer terms, it could
be used to obtain an even better covering of their union (by just
leaving the other contributions fixed), in contradiction to the
assumed minimality of the covering and  property (3)
in Lemma \ref{capSrules1}.
\end{proof}

\begin{remark}
Note that up to this point we have only used a few topological
properties of locally compact groups $G$. The use of the simple
expression of a {\it capacity} (which should be seen as a simplified
or coarse form of a measure) will allow us to derive the existence
of arbitrary fine BUPUs on any locally compact group $\cG$.

Note that similar expressions appear in the construction of the
Haar measure on a given locally compact group. We leave it to the
reader to check this similarity. For us it is only important to
mention that the use of this ``coarse form of a measure'' precedes
the construction of a Haar measure and thus allows to derive the
validity of the integrated group action (as described in Section 3.2
of \citeX{fo95})  without any measure theory,
as it  {does not make use of Lebesgue integration theory nor the
existence of the Haar measure}. 
\end{remark}

\begin{theorem} \label{BUPULCG1}
Let $\cG$ be any locally compact group and $\UUG$ be any neighborhood
of the neutral element $e \in \cG$. Then there exist (plenty of) BUPUs
$\Psifam$ of non-negative functions of size $U$, meaning that 
\begin{equation} \label{suppcond1}
 \supp(\psi_i) \subseteq  x_i U, \quad \forall i \in I,
\end{equation}
for a suitable discrete (in fact uniformly separated)
family $X = (x_i)_{i \in I}$ in $\cG$, and
\begin{equation}\label{partunit01}
   \sum_{i \in I} \psi_i(x) \equiv 1.
\end{equation}
\end{theorem}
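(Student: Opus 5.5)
Take a compact symmetric neighborhood $V$ of $e$ with $V^4 \subseteq U$ (or $V^2$ inside a smaller compact neighborhood), choose a maximal family $X=(x_i)_{i\in I}$ with the $x_iV$ pairwise disjoint, and let $\psi_i = \varphi_i/\sum_j \varphi_j$ for bumps $\varphi_i$ supported in $x_iV^2 \subseteq x_iU$ that equal $1$ on $x_iV$.

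\textbf{Choosing the centres.} By Zorn's lemma there is a maximal family $X=(x_i)_{i\in I}$ in $\cG$ with $x_iV\cap x_jV=\emptyset$ for $i\neq j$. Maximality gives, for every $x\in\cG$, an $i$ with $xV\cap x_iV\neq\emptyset$, so $x\in x_iVV^{-1}=x_iV^2$. Hence the sets $x_iV^2$ cover $\cG$. Such an $X$ is uniformly separated, so in particular discrete.

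\textbf{Bounded overlap via the capacity.} This is the step where I would use $\capS$ instead of Haar measure. Choose an open symmetric $W$ with $W^2\subseteq V$, and set $S=W$.

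Suppose $x_iU'\cap x_jU'\neq\emptyset$, where $U'=V^2$. Then $x_j\in x_iV^4$, and so $x_jW\subseteq x_iV^5=:x_iK$ with $K$ compact. The sets $x_j\overline{W}$ are $S$-separated, because $x_j\overline W W\subseteq x_jV$ and the sets $x_jV$ are disjoint. Lemma \ref{capSrules1}(4) therefore gives
\[
\#\{j\}\cdot \capS(\overline W) = \capS\Bigl(\bigcup_j x_j\overline W\Bigr).
\]
The union lies in $x_iK$. Monotonicity of $\capS$ is immediate from the definition, and together with invariance (2) this bounds the right-hand side by $\capS(K)$. Since $\capS(\overline W)\ge 1$, we get $\#\{j\}\le \capS(K)$, a bound independent of $i$. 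This is BOP with $B_0=\capS(V^5)$.

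One technicality: Lemma \ref{capSrules1}(4) is stated for compact sets and an $S$-separation condition. I will apply it to finite subfamilies; every finite subfamily obeys the same bound, so the whole family does.

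\textbf{Building the functions.} Urysohn's lemma for locally compact Hausdorff spaces gives $\varphi\in\CcG$ with $0\le\varphi\le1$, $\varphi\equiv1$ on $V$, and $\supp\varphi\subseteq V^2$. Set $\varphi_i(x)=\varphi(x_i^{-1}x)$. The sum $\Phi=\sum_i\varphi_i$ is locally finite, since each point has a neighborhood $xV$ meeting only boundedly many $x_iV^2$ by the same capacity count. Hence $\Phi$ is continuous.

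Because $\varphi_i\equiv1$ on $x_iV$, we have $\Phi\ge1$ everywhere. Defining $\psi_i=\varphi_i/\Phi$ gives continuous non-negative functions with $\supp\psi_i\subseteq x_iV^2\subseteq x_iU$ and $\sum_i\psi_i\equiv1$. This is \eqref{suppcond1} and \eqref{partunit01}.

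\textbf{Remaining gap and main obstacle.} The covering step above only uses $V^2$, so the $\varphi_i$ are positive on $x_iV$ but might not be positive on all of $x_iV^2$. Yet the points covered by $x_iV^2$ must be where $\Phi>0$. The fix is to take $\varphi\equiv1$ on $V^2$ and $\supp\varphi\subseteq V^3$, with $V^3\subseteq U$.

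The main obstacle is the overlap bound, where the capacity must stand in for Haar measure. I expect the most care to be needed in matching the $S$-separation hypothesis of Lemma \ref{capSrules1}(4) to the chosen $W$ and $V$ in the non-abelian setting, keeping left and right translates straight.
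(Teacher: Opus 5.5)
Your proposal is correct and is essentially the paper's proof, once you apply the correction you make yourself at the end ($\varphi\equiv 1$ on $V^2$, $\supp\varphi\subseteq V^3\subseteq U$, so the overlap count becomes $\capS(V^7)$ rather than $\capS(V^5)$): a maximal family with pairwise disjoint $x_iV$, covering by the $x_iV^2$, normalization by $\Phi=\sum_i\varphi(x_i^{-1}\cdot)$, and bounded overlap via property (4) of Lemma \ref{capSrules1}. Your choice of an open symmetric $W$ with the compact sets $x_j\overline W$, and your explicit local-finiteness argument for the continuity of $\Phi$, are minor technical refinements that the paper leaves implicit.
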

\begin{proof}
Given $U$ we choose some compact  neighborhood $V \in \UnbG$ such that $V^3 \subseteq U$, and an even smaller neighborhood $S \in \UnbG$ with $ S^2 \subseteq V$. Without loss of generality we will assume that all these neighborhoods are symmetric (with respect to the group action), i.e.\ that $ z \in U$ if and only if $z \inv \in U$ (and the same of the other neighborhoods).

We then select a maximal family $(x_i)_{i \in I}$ with respect to the
property that $ \{ x_i V \suth i \in I \}$ forms a {\it pavement} in $\cG$, i.e.\  such that the sets $x_i V$ do not intersect in a non-trivial way, but that there is no $z \in \cG$ such that $z V$ could be added to the family without destroying this property. Consequently, any translate $zV$ intersects at least one of the sets $x_iV$, or  $z V \cap x_iV \neq \emptyset$ for some index $i \in I$. By the symmetry assumption this implies that
 the family $(x_i V^2)$ covers the group $\cG$.

Due to the regularity of a locally compact group there
exists $ \varphi \in \CcG$ with $\varphi(y) = 1$ on $V^2$,
$\supp(\varphi) \subseteq V^3$ and $\infnorm{\varphi} = 1$.
Thus  (by setting $L_x \varphi(y) = \varphi(x \inv y)$)
 the sum
$$ \Phi(x):=  \sum_{i \in I}  L_{x_i} \varphi(x)
=  \sum_{i \in I} \varphi({x_i}\inv x)  $$
is well defined and satisfies $\Phi(x) \geq 1$ for all $x \in \cG$.
In order to show that the sum is finite (in a uniform sense) for each $x \in \cG$, let us fix $i \in I$ and consider
$I_i := \{ j \suth  \psi_i \psi_j \neq 0 \}.$
%
Since $ I_i \subseteq \{ j \suth x_i V^3 \cap x_j V^3 \neq \emptyset\} $
we have to count the indices $j \in I$ with
 $ x_j \in x_i V^6$, or
$$ x_j S \subseteq x_i V^6 S \subset  x_i V^7. $$
Since the family of sets $ x_j S, i \in I$ is an $S$-separated
family of translates of $S$, thanks to the assumption $S^2 \subseteq V$
and the pavement conditions stated at the beginning of the proof
we can apply property (4) of  Lemma \ref{capSrules1} in order to
finish our proof.

Hence for any fixed $i \in I$ the number of possible indices such that
$\psi_i \cdot \psi_j \neq 0$ is at most   $\capS(V^7)$ (because
we have $\capS(x_j S) = \capS(S) = 1$, using properties (1) and (2)
from Lemma \ref{capSrules1} above).

Overall we have established that the sum defining $\Phi(x)$ is pointwise a
finite sum and the resulting function $\Phi$ satisfies
\begin{equation} \label{Phiprops1}
\tblue{ 1 \leq \Phi(x) \leq \capS(V^7), \quad x \in \cG.}
\end{equation}
Consequently we observe that
the family defined by
$$\tred{\psi_i(x) := \L_{x_i}\varphi(x)/\Phi(x)}, \,\,  i \in I,$$
defines a partition of unity of size $U$, since $V^3 \subseteq U$ and
$$\tred{\supp(\psi_i)=\supp(L_{x_i}\varphi) = x_i \supp(\varphi) \subset x_i V^3 \subset x_i U}.$$
\end{proof}

\section{Towards Integrated Group Representations}
\label{OpBUPUs}

To some extent the usefulness of BUPUs is based on the fact that
they allow to define natural operators. Any non-negative BUPU $\Psifam$  
induces two operators namely the {\it spline quasi-interpolation operator} $\SpPsi$ on $\COGN$, given by
\begin{equation}\label{spindef02b}
\SpPsif  := \sum_{i \in I} f(x_i)\psii,
\end{equation}
and its adjoint operator, the so-called {\it discretization operator}
$\DPsi$ on $\MGN = \COPGN$, which takes the form
\begin{equation}\label{dpsidef02b}
 \DPsimu = \sumiI \mu(\psii) \delta_{x_i}.
\end{equation}
Since any $\SpPsi$ is obviously a nonexpansive operator on $\COGN$
it is also clear that its adjoint is nonexpansive on $\MGN$ as well.

Let us first recall a few facts concerning the discretized measures
for the case of $\cG = \Rdst$.
%
%
In \citeX{fe17} the following facts have been derived:
\begin{proposition} \label{Dpsiconv03}
Given $\mu \in \MRd$ the net\footnote{The reader is definitely familiar
with such a concept, recalling the concept of convergence of Riemannian sums,
which approach the limit $\int_a^b f(x)dx$, given $f \in \Csp([a,b])$.}
$(\DPsimu)_{\sPsitoz}$ is $\wst$-convergent:
\begin{equation}\label{wsconvDPsimu2}
\DPsimu(f)  \to \mu(f), \quad \forall f \in \COsp, |\Psi| \to 0.
\end{equation}
In fact, we have for any BUPU  $\Psi$:
\begin{equation}\label{normcontr2}
\Mbnorm{\DPsimu} \leq \Mbnorm{\mu}, \quad \mu \in \MRd.
\end{equation}
Moreover, the family  $(\DPsimu)_{|\Psi| \leq 1}$ is uniformly
tight in $\MRdN$\footnote{A bounded set $S \subset \MRd$ is called
tight if for every $\epso$ there exists $p \in \CcG$ such that
$\Mbnorm{p\mu-\mu} \leq \veps, \forall \mu \in S$.}. 
\end{proposition}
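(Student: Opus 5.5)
We would prove the three assertions in reverse order of difficulty: first the norm estimate, then tightness, then $\wst$-convergence. Throughout, the only properties of a BUPU we use are non-negativity, $\sumiI \psii \equiv 1$, and $\supp(\psii)\subseteq B_\delta(x_i)$.

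\textbf{Norm estimate.} The operator $\SpPsi$ is nonexpansive on $\COsp(\Rdst)$. Indeed, for $x \in \Rdst$,
$$|\SpPsif(x)| \leq \sumiI |f(x_i)|\psii(x) \leq \infnorm{f}\sumiI \psii(x) = \infnorm{f}.$$
By construction $\DPsimu(f) = \mu(\SpPsif)$, so $\DPsi$ is the adjoint of $\SpPsi$, and the bound $\Mbnorm{\DPsimu}\leq\Mbnorm{\mu}$ follows at once. There is one point to check here: $\SpPsif$ must lie in $\COsp$. This holds because the sum is locally finite, and $f(x_i)\to 0$ as $x_i \to \infty$.

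\textbf{Tightness.} Fix $\epso$. We would use the tightness of the single measure $\mu$: there is $p \in \CcRd$ with $0\le p\le 1$, $p\equiv 1$ on a large ball $B_R(0)$, and $\Mbnorm{\mu - p\mu}\le\veps$. Then we split $\mu = p\mu + (1-p)\mu$. Since $\DPsi$ is linear and nonexpansive,
$$\Mbnorm{\DPsi((1-p)\mu)}\le\veps .$$
For $|\Psi|\le 1$, the measure $\DPsi(p\mu)$ is supported at the points $x_i$ whose $\psii$ meets $\supp p$, so it lives in $B_{R'+1}(0)$, where $\supp p \subseteq B_{R'}(0)$. Now take $q\in\CcRd$ with $q\equiv 1$ on $B_{R'+1}(0)$ and $0\le q\le 1$. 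Then
$$\Mbnorm{q\DPsimu-\DPsimu}\le \Mbnorm{(1-q)\DPsi((1-p)\mu)}\le\veps,$$
uniformly in $\Psi$. One subtlety: the norm of a multiplier $(1-q)$ acting on a measure is at most $\infnorm{1-q}\le 1$, which is elementary in the functional picture.

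\textbf{$\wst$-convergence.} We reduce to uniform approximation: it suffices to show $\infnorm{\SpPsif - f}\to 0$ as $|\Psi|\to 0$ for each $f\in\COsp(\Rdst)$, because then
$$|\DPsimu(f)-\mu(f)|\le\Mbnorm{\mu}\,\infnorm{\SpPsif-f}.$$
Using the partition of unity we write
$$\SpPsif(x)-f(x)=\sumiI (f(x_i)-f(x))\psii(x).$$
Only indices with $|x-x_i|\le\delta$ contribute, so
$$|\SpPsif(x)-f(x)|\le\sup_{|x-y|\le\delta}|f(y)-f(x)|.$$
This tends to $0$ with $\delta$ by the uniform continuity of $f\in\COsp$. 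That uniform continuity is the main step to verify: one argues on a large compact set where $|f|$ is small outside, then uses uniform continuity on the compact set, which is routine. With that in hand the whole statement follows.
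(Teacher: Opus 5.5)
Your proof is correct and follows essentially the route the paper indicates; the paper itself defers the details to \cite{fe17}. In that route $\DPsi$ is the adjoint of the nonexpansive quasi-interpolation operator $\SpPsi$, $\wst$-convergence comes from $\infnorm{\SpPsif - f}\to 0$ via uniform continuity, and tightness comes from splitting off a compactly supported part $p\mu$. The only routine point you leave implicit is the identity $\DPsimu(f)=\mu(\SpPsif)$, which holds because $\sumiI f(x_i)\psii$ converges unconditionally in sup-norm for $f\in\COsp$.
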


We do not go into a discussion of tightness combined with $\wst$-convergence,
but recall that we have  established  strong operator norm convergence
for the corresponding convolution operators (by \cite{fe17}), given pointwise by $ \mu \ast f (x) = \mu(T_x \fchk), \,\, x \in \Rdst$:
\begin{equation} \label{convconvf}  \tred{
\limPsitoz \infnorm{ \DPsimu \ast f - \mu \ast f} = 0, \quad f \in \CORd}.
\end{equation}

Our next goal is to verify that a corresponding behaviour persist
to be valid for  general (isometric) group representations on Banach
spaces. In a sense, this shows that the
Banach algebra $\MGN$, with the composition rule being internal convolution),
provides a universal algebra which can be embedded into the Banach algebra
of all operators on a variety of Banach spaces. Note that in addition
to the crucial estimate (controlling the operator norm of the convolution
operator $f \mapsto \mu \ast f$ by $\Mbnorm{\mu}$) we have to ensure
the validity of the associative law, i.e.\ that we have
for $\mu_1,\mu_2 \in \MG$:
\begin{equation}\label{assocconv02}
(\mu_1 \star \mu_2) \ast f  = \mu_1 \ast (\mu_2 \ast f), \quad f \in \Bsp.
\end{equation}
It is  non-trivial and often forgotten to mention, but it is obvious
for Dirac measures, hence for discrete measures, and thus can be
obtained by taking limits.

Since our goal is mostly the application for LCA groups we
formulate the next definition for the Abelian setting, thus
making use of additive notation for the group law.
\begin{definition} \label{BanRepr1}
A mapping $\rho: \cG  \to \mathcal{L}(\Bsp)$,
the bounded, linear operators on a
Banach spaces $\BspN$,  is called an \tblue{\it isometric  representation}
of a group  $\cG$ on the Banach space $\BspN$ if the mapping
$\rho$ is a group homomorphism, i.e. satisfies
$$ \rho(x+y) = \rho(x) \circ \rho(y), \quad x,y \in \cG,$$
and if each of the operators are isometric on $\BspN$, i.e. if
one has
\begin{equation}\label{isomrepr2}
\tred{  \Bnorm{ \rho(x) f} = \Bnorm{f}, \quad f \in \Bsp, x \in \cG. }
\end{equation}

Moreover, if  the mapping $ x \mapsto \rho(x)f$ is continuous from
$\cG $ to $\BspN$, i.e.\
\begin{equation}\label{strongcont2}
\tblue{ \lim_{x \to 0} \Bnorm{ \rho(x)f - f} = 0, \quad f \in \Bsp.}
\end{equation}
we say that the representation $\rho$ is \tblue{\it strongly continuous}.
\end{definition}

An important family of examples arises from the so-called
{\it regular representation} of $\cG$, i.e.\ the action of the
group by (left or right) translation on functions or distributions
over $\cG$, i.e. $\rho(x) = T_x$ the integrated
action corresponds to the {\it usual convolution} (see \citeX{fo95},p.73).
In this case the notation of
{\it homogeneous Banach spaces} is used, suggesting to call Banach
spaces endowed with an isometric, strongly continuous group
representation of a LC group $\cG$ an 
{\it abstract homogeneous Banach space} (cf. \citeX{sh71}, Chap. 9).

The main result of this paper is the observation that we can establish
the fact  that every strongly continuous, isometric representation
of $\Rdst$ on a Banach space $\BspN$ gives rise to an extended
representation (the so-called {\it integrated group representation})
of the Banach convolution algebra $\MRdN$. In fact, this extension
is unique among all those who respect tight, $\wst$-convergence of
nets (or just sequences), with the understanding that
$\rho(x)$ is of course identified with $\rho(\delx)$\footnote{We avoid
the use of a different symbol for the integrated representation.}.

\begin{remark}
Usually, in the standard literature on the subject, the integrated
group representation describes the action of $f \in \LiG$ on $f \in \Bsp$,
and is thus not immediately visible as a natural extension of the
group representation. Aside from technical arguments (and there are
many such considerations, involving abstract measure theory and a lot
of functional analysis) the focus on $\LiG$ appears to come from a
similar situation, where the group representation of a discrete group $\cG$ can be extended naturally to $\lisp(\cG)$, which has the ``unit vectors'' $\delta_x, x \in \cG$ as a natural (unconditional) basis. In other words, in this case any $f \in \lisp(\cG)$ can be written (uniquely) as
\begin{equation} \label{liG04}
 f = \sum_{x \in \cG} f(x) \delta_x \quad \mbox{with} \quad
   \linorm{f} := \sum_{x \in \cG} |f(x)| < \infty.
\end{equation}
But for a discrete group we have of course $\lisp(\cG) = \Msp(\cG)$ and the
finite, discrete measures are dense in
$\MGN$ (see \cite{si15-2}, Example 6.1.7). In contrast, for
non-discrete groups the subspace  $\MdG$  of discrete
measures (of the form $\mu = \sumkinf c_k \delta_{x_k}$ with
$ \sumkinf |c_k| < \infty$) forms a proper closed subalgebra of $\MGN$.
However, fortunately  $\MdG$ is $\wstd$dense in $\MG$ and the constructive way
of proving this fact (described in \citeX{fe17}) serves is the basis for
the results presented in this paper.
\end{remark}

\begin{remark}
Using the terminology of Banach modules we can state: Any strongly
continuous, isometric representation of $\cG$ on $\BspN$ turns
$\BspN$ into a Banach module over the (commutative, unital) Banach
convolution algebra $\MGN$ (we use the symbol $\star$ for internal
convolution).

Later on (see Section \ref{secHBS}) we will see that the
restriction of the module action to $\LiG$
makes $\BspN$  an {\it essential Banach (convolution) module over } $\LiGN$.
\end{remark}

\vspace{2mm}


Next we  will show that the convolution action of bounded discrete measures on a  {\it homogenous Banach space} can be extended to all of the measures in order to generate an action of $\MGN$ on such a Banach space $\BspN$.

\begin{theorem} \label{conv-meas-homBR0}
Any abstract homogeneous Banach space $\BspN$ with respect to a given,
strongly continuous and isometric representation $\rho$ of a locally compact
group $\cG$ is also a Banach module over the Banach algebra $\MGN$ (with
respect to convolution). This claim includes the validity of following
associativity law:
\begin{equation} \label{convassoclaw}
\rho(\mu_1 \star \mu_2) = \rho(\mu_1) \circ \rho(\mu_2), \quad
\mu_1,\mu_2 \in \MbG. 
\end{equation} 
The mapping $(\mu,f) \mapsto \mu \rhobul f = \rho(\mu)f$ 
is the natural extension of
the action of discrete measure given by $\delta_x \rhobul f = \rho(x)f$,
and satisfies the norm estimate
\begin{equation}\label{meas-onB1}
\|\mu \rhobul f\|_\Bsp\leq\|\mu\|_\Msp \fBN,\quad  \mu \in \MG,\,f \in \Bsp.
\end{equation}
\end{theorem}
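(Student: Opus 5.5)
We define $\rho(\mu)$ for general $\mu$ as a strong limit of the operators attached to its discretizations, prove that this limit exists, and then pass the norm bound and the associativity law to the limit.

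\emph{Discrete measures.} For $\mu=\sum_k c_k\delta_{x_k}\in\MdG$ with $\sum_k|c_k|<\infty$, put
$$\rho(\mu)f:=\sum_k c_k\,\rho(x_k)f .$$
By (\ref{isomrepr2}) this series converges absolutely in $\BspN$, and $\|\rho(\mu)f\|_\Bsp\le\|\mu\|_\Msp\fBN$. Associativity $\rho(\mu_1\star\mu_2)=\rho(\mu_1)\circ\rho(\mu_2)$ holds for discrete measures, because $\delta_x\star\delta_y=\delta_{x+y}$ and $\rho$ is a homomorphism.

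\emph{General measures.} For $\mu\in\MG$, take the nets $\DPsimu$ with $|\Psi|\to0$, where the arbitrarily fine BUPUs exist by Theorem~\ref{BUPULCG1}. Each $\DPsimu$ is discrete, with $\|\DPsimu\|_\Msp\le\|\mu\|_\Msp$ and, by the analogue of Proposition~\ref{Dpsiconv03}, $\DPsimu\to\mu$ in the $\wst$ sense. We set
$$\rho(\mu)f:=\lim_{|\Psi|\to0}\rho(\DPsimu)f$$
and show that $\big(\rho(\DPsimu)f\big)$ is a Cauchy net in $\Bsp$; completeness then gives the limit.

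\emph{The Cauchy estimate (main obstacle).} Fix $f$ and $\epso$.
\begin{enumerate}
\item By tightness choose $p\in\CcG$, $0\le p\le1$, with $\|\mu-p\mu\|_\Msp<\veps$. Then $\|\DPsimu-\DPsi(p\mu)\|_\Msp<\veps$ as well, so it suffices to treat $p\mu$, whose discretizations live on a fixed compact set $K$ once $|\Psi|\le U_0$.
\item By strong continuity (\ref{strongcont2}) the map $x\mapsto\rho(x)f$ is uniformly continuous on $K$. Choose $U$ with $\|\rho(x)f-\rho(y)f\|_\Bsp<\veps$ whenever $x-y\in U-U$ (this is the additive form of the support condition; in the commutative setting no side conventions are needed).
\item Take two fine BUPUs $\Psi=(\psi_i)$ and $\Phi=(\phi_j)$ of size $U$. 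Use the common refinement $\psi_i\phi_j$ and write
$$\rho(\DPsimu)f=\sum_{i,j}\mu(\psi_i\phi_j)\,\rho(x_i)f,\qquad \rho(\DPhimu)f=\sum_{i,j}\mu(\psi_i\phi_j)\,\rho(y_j)f .$$
\item A term with $\psi_i\phi_j\neq0$ has $x_i-y_j\in U-U$, so
$$\|\rho(\DPsimu)f-\rho(\DPhimu)f\|_\Bsp\le\veps\sum_{i,j}|\mu(\psi_i\phi_j)|\le\veps\|\mu\|_\Msp .$$
The last inequality holds because $(\psi_i\phi_j)$ is again a non-negative partition of unity, and $\sum|\mu(h_k)|\le\|\mu\|_\Msp$ for such families. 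This is the point where non-negativity of the BUPUs is used.
\end{enumerate}

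\emph{Norm bound, naturality and uniqueness.} The limit inherits $\|\rho(\mu)f\|_\Bsp\le\|\mu\|_\Msp\fBN$, which gives (\ref{meas-onB1}). It is linear in $\mu$ and $f$. For $\mu=\delta_x$ we have $\DPsimu\to\delta_x$ in norm, so $\rho(\delta_x)=\rho(x)$. The same Cauchy argument, with the roles exchanged, shows more generally that $\rho(\mu_\alpha)f\to\rho(\mu)f$ for every bounded, tight, $\wst$-convergent net $\mu_\alpha\to\mu$. This gives uniqueness of the extension.

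\emph{Associativity (\ref{convassoclaw}).} Use $\DPsi\mu_1\star\DPsi\mu_2$, which is a bounded, tight net of discrete measures converging $\wst$ to $\mu_1\star\mu_2$. For this convergence we rely on the definition of convolution from \cite{fe17}, namely $\mu_1\star\mu_2(h)=\mu_1\big(\mu_2\ast h\big)$-type joint continuity on tight bounded sets. By the previous paragraph,
$$\rho(\DPsi\mu_1\star\DPsi\mu_2)f\to\rho(\mu_1\star\mu_2)f .$$
On the other hand, discrete associativity gives
$$\rho(\DPsi\mu_1\star\DPsi\mu_2)f=\rho(\DPsi\mu_1)\,\rho(\DPsi\mu_2)f .$$
This converges to $\rho(\mu_1)\rho(\mu_2)f$: the operators $\rho(\DPsi\mu_1)$ are uniformly bounded by $\|\mu_1\|_\Msp$ and converge strongly, and $\rho(\DPsi\mu_2)f\to\rho(\mu_2)f$ in norm.
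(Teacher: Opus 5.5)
Your argument is the paper's own. You discretize by $\DPsimu$, compare two discretizations through the joint refinement $(\psi_i\phi_j)$, and bound the difference using $\sum_{i,j}|\mu(\psi_i\phi_j)|\le\Mbnorm{\mu}$. You then conclude by completeness for Cauchy nets and pass the norm bound and associativity to the limit.

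The tightness reduction in your step~1 is harmless but unnecessary. By isometry, $\Bnorm{\rho(x)f-\rho(y)f}=\Bnorm{\rho(x-y)f-f}$, so $x\mapsto\rho(x)f$ is uniformly continuous on all of $G$; this is exactly the paper's estimate. It yields $\Bnorm{\rho(\nu)f-\rho(\DPsi\nu)f}\le\veps\Mbnorm{\nu}$ for every $U$-BUPU, with $U$ depending only on $f$ and $\veps$. That uniformity is what your later claim about bounded tight nets really uses.

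Your associativity argument is more explicit than the one-line justification in the proof of Theorem~\ref{conv-meas-homBR0}. It matches what the paper only makes precise later, in Theorem~\ref{wsttightconv} and Corollary~\ref{iterconv}. The input you import, $\DPsi\mu_1\star\DPsi\mu_2\to\mu_1\star\mu_2$ in the $w^*$-sense, does follow from (\ref{convconvf}) on $\COG$ together with the $w^*$-convergence of $\DPsi\mu_1$.

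One step, however, rests on a false claim: $\DPsi\delta_x=\sum_i\psi_i(x)\delta_{x_i}$ does not converge to $\delta_x$ in norm. Whenever no center $x_i$ equals $x$, one has $\Mbnorm{\DPsi\delta_x-\delta_x}=\sum_i\psi_i(x)+1=2$. The convergence is only in the $w^*$-sense, which is precisely why $\MdG$ is merely $w^*$-dense in $\MG$.

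The conclusion $\rho(\delta_x)=\rho(x)$ is nevertheless true, by the direct estimate
\[
\Bnorm{\rho(\DPsi\delta_x)f-\rho(x)f}\le\sum_i\psi_i(x)\,\Bnorm{\rho(x_i-x)f-f}\le\sup_{u\in -U}\Bnorm{\rho(u)f-f}.
\]
This holds because $\psi_i(x)\neq0$ forces $x_i-x\in-U$, and the right-hand side tends to $0$ as $U$ shrinks, by strong continuity. Combined with linearity in $\mu$ and the norm bound, this also shows that your limit definition agrees with your series definition on all of $\MdG$. You need that consistency for the extension to be well defined and to be the ``natural'' extension of the discrete action.
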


\begin{proof}
We start from the expected action of  Dirac measures via
\begin{equation}\label{rhobuldelta}
\delta_x \rhobul f = : \rho(x)f, \quad f \in \Bsp.
\end{equation}
Since discrete measures are absolutely convergent sums of Dirac measures it
is then clear that we have for a discrete measure
$\mu = \sumkinf c_k \delta_{x_k}$, with
$\sumkinf |c_k| = \Mbnorm{\mu} < \infty$:
\begin{equation} \label{rhomudisc1}
\mu \rhobul  f  = \sumniinf c_k  \rho(x_k) f,
\end{equation}
the sum being absolutely convergent for each
$f$ and $\mu \in \MdG$, since we have
\begin{equation} \label{rhomudisc2}
\Bnorm{\mu \rhobul  f}  \leq  \sumniinf |c_k|  \Bnorm{\rho(x_k) f}
\leq  \Bnorm{f} \sumniinf |c_k| = \Mbnorm{\mu} \Bnorm{f}.
\end{equation}

Observe also that the assumptions concerning $\rho$ imply that this action
of $\MdG$ is not just an individual action (given for each $\mu \in \MdG$)
but it defines in fact a representation of the Banach convolution algebra
$\MdGN$, since we have  
\begin{equation}\label{rhobulass1}
(\mu_1 \star \mu_2) \rhobul f =  \mu_1 \rhobul (\mu_2 \rhobul f),
\quad    \mu_1,\mu_2 \in \MdG, \, f \in \Bsp
\end{equation}
as a consequence of the validity of
\begin{equation}\label{assocDirac1}
(\delta_x \star \delta_y) \rhobul f = \delta_{x+y} \rhobul f
= \rho(x+y) f = \rho(x) (\rho(y) f) = \delta_x \rhobul (\delta_y \rhobul f).
\end{equation}
Consequently, for given $\mu \in \MbG$ and  $\finB$ we set
\begin{equation}
 \DPsimu \rhobul f  =  \sum_{i \in I}  \mu(\psi_i) \rho(x_i) f.
\end{equation}
Based on (\ref{rhomudisc2}) and (\ref{normcontr2})  we have for any $\Psi$:
\begin{equation}\label{discrmeasconv1}
\| \DPsimu \rhobul f \|_\Bsp  \leq
\fBN \sum_{i \in I} | \mu(\psi_i)|  =  \fBN  \Mbnorm{\DPsimu}
  \leq\Mbnorm{\mu}  \fBN.
\end{equation}

We will show next that it is convergent, as $ |\Psi|  \to 0$
or $\diam(\Psi) \to 0$.
The motivation for this approach becomes plausible once one understands  $\DPsimu$ on $f$ as a Riemann-type sum for the Banach-space valued integral of $x \to \rho(x)f $, usually written as $  \int_\cG  \rho\ofp{x} f\ofp{x} d \mu(x) $.

Given two families $\Psifam$ and $\Phifam$, with their centers
$(x_i)_{i \in I}$ and $(y_j)_{j \in J}$ respectively.
We define their {\it joint refinement}
$ \Psi-\Phi$ as the family   $(\psi_i \phi_j)_{(i,j) \in I\diamond J}$.
It is natural to take   $I \diamond J$, the family of all index pairs such
that $\psi_i \cdot \phi_j \neq 0$ (because all the other products are trivial and should be neglected) as the new index set.
In fact, if both $\Psi$ and $\Phi$ are sufficiently ``fine'' BUPUs one has:
\footnote{Using  that  $\psi_i = \sum_{j \in j}  \psi_i \phi_j$, hence $\sum_{(i,j) \in I \diamond J}  \psi_i \phi_j  \equiv  1$ and
$\sum_{(i,j) \in I \diamond J}   \|(\psi_i \phi_j) \mu\|_\Msp = \| \mu\|_\Msp.$}

\begin{equation}\label{Riemann-estim1}
\|\DPsimu \rhobul f -\DPhimu \rhobul f\|_\Bsp = \sum_{(i,j) \in I \diamond J}
\|\rho(x_i) f - \rho(y_j) f\|_\Bsp  |\mu(\psi_i \phi_j)| \leq
\end{equation}
$$
\sup_{(i,j) \in I \diamond J}  \| \rho(x_i)[f - \rho(y_j - x_i) f] \|_\Bsp
\sum_{(i,j) \in I \diamond J}    \|(\psi_i \phi_j) \mu\|_\Msp \leq \vareps  \| \mu\|_\Msp,
$$
if only $  \Psi$ resp.\ $\Phi$ are fine enough.
Due to the completeness of $\BspN$ one finds that
there is a uniquely determined limit, which we will
call $\mu \rhobul f$. It is then obvious that
\begin{equation}\label{muastf-estim1}
\| \mu \rhobul f\|_\Bsp = \lim_{|\Psi| \to 0} \| \DPsimu \rhobul f \|_\Bsp  \leq \limsup_{\sPsitoz} \|\DPsimu\|_\Msp \|f\|_\Bsp = \muMN  \fBN.
\end{equation}
Of course it remains to show that the
action defined in this way is associative, i.e.\ that
\begin{equation}\label{assocat}
(\mu_1 \star \mu_2)\rhobul f = \mu_1 \rhobul (\mu_2 \rhobul f),
\quad \forall \mu_1,\mu_2 \in \MG, f \in \Bsp,
\end{equation}
but this follows from the associativity for the
discrete measures $\DPsimu$ and $\DPhimu$.
\footnote{Note that H. S. Shapiro
(cf.\ \cite{sh71}) is making this associativity an {\it extra axiom},
apparently because he could not prove it directly for technical
reasons, based on the way how he defines the action of a bounded measures
on an ``abstract homogeneous Banach space''. H.C.~Wang exhibits
in \cite{wa77} an example of what he calls a
{\it semi-homogeneous Banach space}
(without strong continuity of the action of $\cG$ on $\BspN$,
which does not allow the extension to all of the bounded measures.
Indeed, it is a Banach space of measurable and bounded functions on $\Rst$ which is non-trivial, but which does not contain
any non-zero {\it continuous} function! The example was suggested
to him in a correspondence by the author of this note. }
\end{proof}

\begin{remark}
In the derivation above we have used the isometric property and the fact that $\rho(x_1 x_2) = \rho(x_1) \circ \rho(x_2)$.
It would have been no problem if this identity was only true ``up to some constant of absolute value one'', i.e.\ if
one has a {\it projective representation} of $\cG$ only,
such as the mapping
$\lato  \mapsto  \rho(\lambda) = M_\omega T_t $
from $\TFRd$ into the unitary operators on the Hilbert space
$\LtRdN$, which is one of the key players in
{\it time-frequency analysis}. This direction will also be
explored further in subsequent notes.
\end{remark}

\begin{remark}
Another possible and powerful extension of the above result
will involve cases where the group action is not isometric anymore,
but still bounded by some weight function, i.e. the case where
each $\rho(x)$ is a bounded operator and one has control on the
operator norms of these operators on $\BspN$. In this case one
has to replace the algebra $\MGN$ by weighted versions, and
$\LiGN$ by Beurling algebras (see \citeX{re68}). Also this
direction will be pursued elsewhere in more details. It is a
crucial starting point for the analysis of TMIBs, i.e.
translation and modulation invariant function spaces (see
e.g. \citeX{dipivi15-1},\citeX{fegu21}).
\end{remark}

\section{The Wiener Algebra $\WCOlisp$}   \label{secWG}

The purpose of this section is to demonstrate that the concept
of homogeneous Banach spaces over LCA groups, originally introduced in the
book of Y.~Katznelson \cite{ka76} (see Remark \ref{katzrem}  below),
 can be introduced {\it without} making use of the Haar integral.
For this purpose we will make use of Wiener's algebra (as described
in  \cite{fe77-3}),
which is found already in Reiter's book (\citeX{re68}
and \citeX{rest00}) for $\cG = \Rdst$,
as a prototypical example of a Segal algebra.
It was the model case for many characterizations of minimal spaces
(pointwise $\CORd$-module in this case), see \citeX{fe77-3}, and
the subsequent papers \citeX{fe81} and  \citeX{fe87-1}.

Obviously BUPUs play an important role in the description of
{\it Wiener amalgam} spaces (such as Wiener's algebra, which
is of the form $\WCOliRd$, or the Segal algebra $\SORd = \WFLiliRd$).
The justification for characterizing Wiener amalgam spaces via BUPUs
comes from the main results of \citeX{fe83}.
Leaving out details let us summarize
a few properties of Wiener's algebra on a general LCA group $\cG$:
\begin{definition} \label{WienerAlgdef2}
\tblue{ $$ \Wsp(\cG) := \WCOlisp(\cG) := \{ f \in \COG \suth
\normta f \Wsp :=  \sumiI \infnorm {f \psii} < \infty \}.$$}
\end{definition}
We have the following general facts, which are easily proved
without making use of the existence of a Haar measure on $\cG$:
\begin{proposition} \label{Wienerprops1}
\begin{enumerate}
\item 
$\WspN$ is a Banach space, for any BUPU $\Psi$,
and continuously embedded into $\COGN$.
\item $\WspN$ is a Banach ideal in $\COGN$, i.e.\
pointwise products are in $\Wsp$, in particular it is a
Banach algebra under pointwise multiplication;
\item The space does not depend on the particular choice of $\Psi$,
i.e.\ different BUPUs define the same space and equivalent norms;
\item The decomposition of $f \in \Wsp$ as $f = \sumiI f \psii$
is not only valid absolutely in $\COGN$, but even in $\NSPB \Wsp$.
Hence $\CcG$ is dense in $\NSPB \Wsp$ and $\Wsp$ is dense in $\COGN$;
\item For {\it any open, relatively compact neighborhood $Q$ of the identity}
we have the following {\it atomic characterization} of $\Wsp$, via absolutely
convergent series:
$$ {\Wsp_{\nnth  \negthinspace at} := \{ f \in \COG \suth  f = \sumkinf f_k, \,\, \mbox{with} \, \sumkinf \infnorm{f_k} < \infty, \exists x_k \in \cG:
\supp(f_k)\subseteq x_k+Q \}. }$$
\item The corresponding (equivalent) {\tt inf}-norm (infimum over all admissible  sums) is isometrically translation invariant,
    with continuous translation, i.e.
\begin{equation}\label{Wsptransl1}
\normta {T_x f} \Wsp = \normta{f} \Wsp, \, x \in \cG, \quad \mbox{and} \quad 
\lim_{x \to e} \normta { T_x f - f} \Wsp = 0,
\quad \forall f \in \Wsp.
\end{equation} 
\end{enumerate}
\end{proposition}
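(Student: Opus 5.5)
We fix a non-negative BUPU $\Psifam$ with centers $(x_i)$, of some size $Q$ (a relatively compact neighborhood, supplied by Theorem \ref{BUPULCG1}). The overlap constant of any such family is controlled through $\capS$ (Lemma \ref{capSrules1}) instead of Haar measure. The plan is to prove (1)--(6) in roughly that order. The central tool is a comparison lemma, proved first: if $\Phifam$ is another BUPU of size $Q'$, then for fixed $j$ the number of $i$ with $\psii\phi_j\neq 0$ is bounded by some $N(Q,Q')$, uniformly in $j$. This follows exactly as in the proof of Theorem \ref{BUPULCG1}. The centers $x_i$ involved lie in $y_jQ'Q^{-1}$, and the separated family $(x_iS)$ there is counted by $\capS(Q'Q^{-1}S)$ via property (4) of Lemma \ref{capSrules1}. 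If the centers are only relatively separated, we use a finite union of separated subfamilies, as in Lemma \ref{relsep02}.

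For (1), the embedding comes from $|f(x)|\le \sum_i|f\psii(x)|\le \normta f\Wsp$. Completeness is shown by identifying $\Wsp$ with a closed subspace of $\lisp(I,\COG)$ via $f\mapsto (f\psii)_i$. The image is closed: if $(f_n\psii)_i$ converges to $(g_i)$, then $f_n$ converges uniformly to some $f$, $g_i=f\psii$, and Fatou for sums gives $f\in\Wsp$. For (2), $\infnorm{hf\psii}\le\infnorm h\infnorm{f\psii}$. For (3), the comparison lemma gives
$$\sum_j\infnorm{f\phi_j}\le\sum_j\sum_{i:\psii\phi_j\ne0}\infnorm{f\psii}\le N\,\normta f{\Wsp,\Psi},$$
using $0\le\phi_j\le 1$ and $f\phi_j=\sum_i f\psii\phi_j$.

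For (4), we have $\normta{f-\sum_{i\in F}f\psii}\Wsp\le C\sum_{i\notin F'}\infnorm{f\psii}$ for $F'$ slightly smaller than $F$, by the same overlap count. Hence the decomposition converges in $\Wsp$, and its partial sums lie in $\CcG$. Density in $\COG$ holds because $\CcG\subseteq\Wsp$: a compact set meets only finitely many $x_iQ$, by a capacity argument. For (5), the inclusion $\Wsp\subseteq\Wsp_{at}$ holds provided the BUPU is chosen of size $Q$ (Theorem \ref{BUPULCG1}), so that $f_i=f\psii$ are admissible atoms. Conversely, for an atom $f_k$ supported in $x_kQ$, the number of $i$ with $\psii f_k\ne0$ is at most $N$, so $\normta{f_k}\Wsp\le N\infnorm{f_k}$, and the atomic series converges absolutely in the Banach space $\Wsp$. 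Thus the two norms are equivalent.

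For (6), translation invariance of the inf-norm is immediate, because $T_x$ maps atoms to atoms (supports $x_k+Q\mapsto x+x_k+Q$) with the same sup-norms. Strong continuity is the main obstacle. For $f\in\CcG$, with $K=\supp f$ and $x$ in a compact neighborhood $W$ of $e$, the function $T_xf-f$ is supported in $WK$, which is covered by finitely many ($M$, independent of $x$) translates $z_l+Q$. Choosing a finite partition of unity subordinate to this cover, we get $\normta{T_xf-f}\Wsp\le M\infnorm{T_xf-f}\to0$ by uniform continuity of $f$. For general $f$ we use density of $\CcG$ from (4) together with the isometry of $T_x$, in a standard $\veps/3$ argument. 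The delicate point is making the finite partition of unity and the bound $M$ uniform in $x$ without Haar measure. This is handled by fixing $W$ first and using subadditivity of $\capS$.
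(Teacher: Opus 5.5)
The paper does not prove Proposition~\ref{Wienerprops1}. It only states that these facts are easily obtained without the Haar measure, and it refers to \cite{fe83} for the justification of BUPU-based descriptions of amalgam spaces. Your proposal is correct and supplies exactly such a Haar-free proof:
completeness via the isometric embedding $f\mapsto(f\psii)_{i\in I}$ into $\lisp(I,\COG)$;
the ideal property from $\infnorm{hf\psii}\le\infnorm{h}\,\infnorm{f\psii}$;
independence of $\Psi$ and the atomic description from uniform overlap counts;
translation invariance and strong continuity from the atomic norm together with density of $\CcG$.
The general amalgam theory of \cite{fe83} handles arbitrary local and global components at once. Your route, by contrast, never leaves the discrete level: every constant is a covering number of the type $\capS(\cdot)$, in the spirit of Section~\ref{secCAPBUPU}.

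One step should be made explicit. In (3) the interchange $\sum_j\sum_{i:\psii\phi_j\neq0}\infnorm{f\psii}=\sum_i\infnorm{f\psii}\,\#\{j:\psii\phi_j\neq0\}$ needs, for fixed $i$, a bound on the number of centres $y_j$ of the \emph{arbitrary} BUPU $\Phi$. That is the reverse of your comparison lemma as stated, whose proof counts the separated centres of your special $\Psi$. You gesture at this via Lemma~\ref{relsep02}, but a BUPU only gives bounded overlap for its own size $Q'$. That lemma presupposes relative separation, or controlled overlap for \emph{every} relatively compact $W$.

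The missing step is short. Choose $V\in\UnbG$ with $V^{-1}V\subseteq Q'(Q')^{-1}$. Two centres in a common translate $wV$ then satisfy $y_jQ'\cap y_kQ'\neq\emptyset$, so each such translate contains at most $B_0$ centres. The relevant centres lie in $x_iQ(Q')^{-1}$, which is covered by $\operatorname{cap}_V\bigl(\overline{Q(Q')^{-1}}\bigr)$ translates of $V$. This uses that $Q'$ is relatively compact, which must be assumed anyway: for non-compact $\cG$ the single function $\psi\equiv1$ is a non-negative BUPU of size $\cG$ and yields $\Wsp=\COG$.

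A few minor points:
\begin{itemize}
\item For BUPUs that are not non-negative, replace $\phi_j\le1$ by $\sup_j\infnorm{\phi_j}$.
\item In (5) only countably many $f\psii$ are non-zero, so the atoms can be indexed by $\Nst$.
\item In (4) and (6) neither the smaller set $F'$ nor the auxiliary partition of unity is needed, since directly $\normta{\sum_{i\notin F}f\psii}{\Wsp}\le B_0\sum_{i\notin F}\infnorm{f\psii}$ and $\normta{T_xf-f}{\Wsp}\le\#\{i:x_iQ\cap WK\neq\emptyset\}\,\infnorm{T_xf-f}$.
\end{itemize}
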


\begin{remark}
As a matter of fact the functions in $\Wsp(\Rdst)$ are (even absolutely
Riemann) integrable and thus $\Wsp$ is a dense subspace of $\LiRdN$.
Combined with property (\ref{Wsptransl1}) this implies that
$\WCOlisp(\Rdst)$  is in fact a {\it Segal algebra} on $\Rdst$
(see \citeX{re68,rest00}). Similar comments apply for general LCA
groups  based on Prop. \ref{Wienerprops1}, once the existence of a
Haar measure is established (in order to characterize $\LiGN$ as
a closed ideal of $\MGN$, see Section \ref{secHBS}).
\end{remark}

Since $\WspN \hkr \COGN$ as a dense subspace, the dual space
$\Wsp^*$ (which can be characterized as the subspace $\WMlinf(\cG)$
of all Radon measures) is known in the literature as the space of {\it translation-bounded measures}.

First we give a characterization of $\Wsp^*$ as a subspace of all
tempered distributions (for the case $\cG = \Rdst$). Note that
in this case $\ScRd$ is a dense subspace of $\Wsp$.
\begin{lemma}\label{WBdual01}
A tempered distribution $\siScP$ extends to a bounded linear functional
on $\WspN$ if and only if one has the following estimate:

\tblue{Fixing a compact set $Q$ (with non-void interior) there
exists a constant $B(Q)$ such that one has:   For any $\varphi \in \DRd
= \ScRd \cap \CcRd$ (the space of infinitely smooth functions with compact support) with $\supp(\varphi) \subseteq x+Q$ for some $x \in \cG$: }
\begin{equation}\label{trlbdest1}
\tred{ |\sigma(\varphi)| \leq B(Q) \infnorm{\varphi} }.
\end{equation}
Equivalently one has: A tempered distribution defines a translation-bounded measure if and only if for any $p \in \DRd$ the family
$ (T_x p \cdot \sigma)$ constitutes a bounded family in $\MRdN$.
\end{lemma}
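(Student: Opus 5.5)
We prove the two directions of the first characterization separately and then show that the bounded-family formulation is equivalent to it. Throughout, the main tool is a BUPU $\Psi = (\psii)_{i\in I}$ of size $Q$ with smooth constituents $\psii \in \DRd$; such a BUPU exists on $\Rdst$, for instance a regular one as in Definition \ref{regFL1BUPUd}. By Proposition \ref{Wienerprops1}, $\normta{f}{\Wsp} = \sumiI \infnorm{f\psii}$ is an admissible norm on $\Wsp$, and the decomposition $f=\sumiI f\psii$ converges in $\Wsp$.

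For necessity, assume $\sigma$ extends to some $\tilde\sigma \in \Wsp^*$. Take $\varphi \in \DRd$ with $\supp(\varphi)\subseteq x+Q$. Then $\varphi\psii \neq 0$ only for those $i$ with $(x_i+Q)\cap(x+Q)\neq\emptyset$. By Lemma \ref{relsep02}(3), the number of such $i$ is bounded by a constant $B$ that does not depend on $x$. Hence $\normta{\varphi}{\Wsp}\le B\infnorm{\varphi}$, and so $|\sigma(\varphi)|\le \|\tilde\sigma\|_{\Wsp^*} B \infnorm{\varphi}$, which is estimate \eqref{trlbdest1} with $B(Q)=B\|\tilde\sigma\|_{\Wsp^*}$.

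For sufficiency, first take $f\in\ScRd$ and write $\sigma(f)=\sumiI \sigma(f\psii)$. This series converges because $f=\sumiI f\psii$ converges in $\ScRd$: the BUPU is smooth and bounded in every $C^k$-norm, and $f$ decays rapidly. Each $f\psii$ lies in $\DRd$ with support in $x_i+Q$, so \eqref{trlbdest1} gives
$$|\sigma(f)|\le B(Q)\sumiI\infnorm{f\psii}=B(Q)\normta{f}{\Wsp}.$$
Since $\ScRd$ is dense in $\Wsp$, $\sigma$ extends uniquely to a bounded functional on $\Wsp$.

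Finally we treat the reformulation with the family $(T_xp\cdot\sigma)$. Suppose $\sigma\in\Wsp^*$ and $p\in\DRd$. For $g\in\ScRd$ we have $|(T_xp\,\sigma)(g)| = |\sigma(T_xp\, g)|\le \|\sigma\|_{\Wsp^*}\normta{T_xp\, g}{\Wsp}$. The function $T_xp\,g$ has support in the compact set $x+\supp(p)$, so the counting argument from the necessity step gives $\normta{T_xp\, g}{\Wsp}\le C_p\infnorm{g}$ with $C_p$ independent of $x$. Hence each $T_xp\cdot\sigma$ extends to an element of $\MRd=\COPsp(\Rdst)$ with norm at most $\|\sigma\|_{\Wsp^*}C_p$, uniformly in $x$. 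Conversely, assume the family $(T_xp\cdot\sigma)$ is bounded in $\MRdN$. Choose $p\in\DRd$ with $p\equiv1$ on a neighbourhood of $Q$. For $\varphi$ supported in $x+Q$ we then have $\varphi = T_xp\cdot\varphi$, so $\sigma(\varphi)=(T_xp\,\sigma)(\varphi)$ is bounded by $\sup_x\Mbnorm{T_xp\,\sigma}\infnorm{\varphi}$, which is \eqref{trlbdest1}. By the sufficiency step, $\sigma$ extends to $\Wsp$.

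The step that needs the most care is the convergence $\sigma(f)=\sumiI\sigma(f\psii)$ for $f\in\ScRd$. It relies on Schwartz-space convergence of the BUPU decomposition, which has to be checked seminorm by seminorm using the uniform $C^k$-bounds of a regular smooth BUPU. Alternatively, one can avoid this by first restricting to $f\in\DRd$, where the sum is finite, and then using that $\DRd$ is dense in $\Wsp$ because $\CcG$ is dense there by Proposition \ref{Wienerprops1}(4), together with mollification. The extension obtained this way agrees with $\sigma$ on $\ScRd$ because both are continuous on $\ScRd$: $\ScRd\hkr\Wsp$ continuously, and $\DRd$ is dense in $\ScRd$.
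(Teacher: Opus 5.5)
Your argument is correct. The paper states Lemma~\ref{WBdual01} without proof. The argument it evidently has in mind is the one used in the proof of Lemma~\ref{charWspdu}, namely the atomic characterization in Proposition~\ref{Wienerprops1}(5). There, estimate~(\ref{trlbdest1}) says that $\sigma$ is uniformly bounded on sup-normalized atoms supported in translates of $Q$, hence bounded for the equivalent infimum norm; conversely, each such atom has atomic norm at most $\infnorm{\varphi}$. You work instead with the norm $\sum_i \infnorm{f\psi_i}$ for a smooth regular BUPU. Necessity then becomes an explicit counting estimate: you verify by hand that atoms have uniformly controlled $\Wsp$-norm, which is the key estimate behind the atomic characterization. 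Sufficiency comes from decomposing $f$ itself. This buys something real. Estimate~(\ref{trlbdest1}) is only assumed for test functions, whereas the atoms in Proposition~\ref{Wienerprops1}(5) are merely continuous. So $\sigma(f_k)$ is not even defined a priori, and the atomic shortcut would need an extra approximation step: mollified atoms relative to a slightly shrunken $Q$, plus a justification of $\sigma(f)=\sum_k\sigma(f_k)$. Your smooth partition produces pieces $f\psi_i\in\DRd$ automatically. The price is the convergence of $\sum_i f\psi_i$ in $\ScRd$, or, in your second variant, the density of $\DRd$ in $\Wsp$ together with the continuity of $\ScRd\hookrightarrow\Wsp$; you justify both adequately. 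Your treatment of the family $(T_xp\cdot\sigma)$ with a plateau function $p\equiv 1$ on $Q$ is the same device as the functions $T_{x_i}p$ in the proof of Lemma~\ref{charWspdu}.
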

Dual to the atomic characterization  of $\WG$ we can also provide
a kind of {\it atomic representation} of
$\Wsp^*$, which works as follows:
\begin{lemma}\label{charWspdu}
Given any well-spread family $(x_i)_{i \in I}$ in $\cG$, the
elements $\sigma \in \Wsp^*$ can be characterized as the
$\wstd$convergent series of the form\footnote{Recall
that $(\sigma\cdot h) := \sigma(h \cdot f)$ by definition.}
\begin{equation}\label{sigWspdu}
   \sigma  = \sumiI \mu_i \cdot T_{x_i} p,
\end{equation}
for some fixed, non-zero $p \in \CcG$
and some bounded family $(\mu_i)_{i \in I}$   in $\MGN$.
\end{lemma}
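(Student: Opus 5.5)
The plan is to prove the two inclusions separately. Throughout, ``well-spread'' is taken to mean that $(x_i)_{i \in I}$ is relatively separated in the sense of Lemma \ref{relsep02}, and that for some relatively compact open neighborhood $W$ of the identity the translates $(x_i+W)_{i\in I}$ cover $\cG$. Since the statement only asks for \emph{some} fixed non-zero $p \in \CcG$, I will fix $p$ adapted to the family. Choose $q \in \CcG$ with $0 \le q \le 1$, $q \equiv 1$ on $W$ and $\supp(q) \subseteq W'$ for a compact $W'$. Then choose $p \in \CcG$ with $p \equiv 1$ on $W'$.

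\emph{Step 1: every such series defines an element of $\Wsp^*$.} Let $(\mu_i)$ be bounded in $\MGN$ with $\sup_i \Mbnorm{\mu_i} = C$, and let $f \in \Wsp$. Then
$$ \sumiI |\mu_i(T_{x_i}p \cdot f)| \le C \infnorm{p} \sumiI \infnorm{f \cdot {\bf 1}_{x_i+\supp(p)}}. $$
The right-hand sum should be bounded by a constant times $\normta f \Wsp$. To see this, estimate each term by the finitely many BUPU pieces $f\phi_j$ (for a fixed BUPU $\Phi$) whose supports meet $x_i+\supp(p)$. By Lemma \ref{relsep02}(2),(3), each $j$ is met by only a uniformly bounded number of indices $i$, and vice versa. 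Hence the series converges absolutely for every $f \in \Wsp$. Its partial sums therefore converge in the $\wstd$sense, and the limit $\sigma$ satisfies $|\sigma(f)| \le C' \normta f \Wsp$.

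\emph{Step 2: every $\sigma \in \Wsp^*$ has such a representation.} Put $\Phi(x) = \sumiI T_{x_i} q(x)$. By the covering property $\Phi \ge 1$, and by Lemma \ref{relsep02} we also have $\Phi \le B$, exactly as in the proof of Theorem \ref{BUPULCG1}. Thus $\psi_i := T_{x_i}q/\Phi$ is a non-negative BUPU with centers $x_i$. Define $\mu_i(h) := \sigma(\psi_i h)$ for $h \in \COG$. Since $\psi_i h$ has support in the compact set $x_i + W'$, Proposition \ref{Wienerprops1}(3),(6) gives a uniform estimate $\normta{\psi_i h}\Wsp \le C_{W'} \infnorm{h}$. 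Here $C_{W'}$ depends only on how many members of a fixed BUPU meet a translate of $W'$, which is translation invariant. Hence $\mu_i \in \MG$ with $\sup_i \Mbnorm{\mu_i} \le C_{W'}\normta{\sigma}{\Wsp^*}$. Because $p \equiv 1$ on $x_i$-translates of $\supp(q)$, we get $(\mu_i\cdot T_{x_i}p)(f) = \sigma(\psi_i T_{x_i}p\, f) = \sigma(\psi_i f)$. Summing over $i$ gives $\sumiI \sigma(\psi_i f) = \sigma(f)$, because $f = \sumiI \psi_i f$ converges in $\NSPB \Wsp$ by Proposition \ref{Wienerprops1}(4) applied to this BUPU $\Psi$.

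\emph{Main obstacle.} The step needing the most care is the uniform bound $\normta{\psi_i h}\Wsp \le C\infnorm{h}$, together with the norm convergence of $\sumiI \psi_i f$ in $\Wsp$ for this particular BUPU. Both reduce to the independence of the $\Wsp$-norm from the chosen BUPU and to the bounded overlap count from Lemma \ref{relsep02}. I would first verify these estimates directly with the atomic norm of Proposition \ref{Wienerprops1}(5): each $\psi_i h$ is a finite sum of at most $\capS(W')$-many atoms supported in translates of $Q$.
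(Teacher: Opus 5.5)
Your proposal is correct and follows essentially the same route as the paper. You decompose $\sigma$ through a BUPU $\Psi$ centered at the $x_i$ with $T_{x_i}p\equiv 1$ on $\supp(\psi_i)$ and set $\mu_i=\sigma\cdot\psi_i$, bounded via a uniform $\Wsp$-estimate for functions supported in translates of a fixed compact set; the converse comes from the uniform overlap count that relative separation provides. Your explicit construction of a BUPU with the prescribed centers (using the covering part of ``well-spread'' and the normalization by $\sum_i T_{x_i}q$, as in Theorem \ref{BUPULCG1}) and your use of the BUPU norm instead of atoms in the converse direction only make explicit details that the paper leaves implicit.
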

\def\pxi{{T_{x_i}p}}
\begin{proof}
The proof has two directions.
First of all we fix some $U$-BUPU $\Psifam$ and some
$p \in \CcG$ with $\linfnorm{p} = 1$,
$ p(x) \equiv 1$ on $U$, and
hence $\psii = \psii \cdot \pxi$ for $i \in I$.

This allows us to decompose any linear functional $\sigma \in \Wsp^*$ in the usual way as a $\wstd$convergent series of the form
\begin{equation}\label{sigWstdec}
\sigma = \sumiI \sigma \cdot \psii
  = \sumiI (\sigma \cdot \psii)\cdot \pxi.
\end{equation}
We will check that the functionals
$\mu_i := \sigma \cdot \psii $
define a bounded family in $\MGN$.  In fact, we have,
 thanks to the atomic  characterization
\begin{equation} \label{muiestim1}
\Mbnorm{\mu_i} =
\Mbnorm{\sigma \psii} \leq  \linfnorm{\psii} \Mbnorm{\sigma \cdot (\pxi)}   \leq C \normta \sigma {\Wsp^*}, \quad i \in I.
\end{equation}

In order to prove the converse let $(\mu_i)_{i \in I}$ be a bounded family
in $\MGN$. We have to control the norm of the functional $\sigma$
given by Equ. (\ref{sigWstdec}).
Due to the atomic characterization it is enough to present an estimate
for the atoms, i.e. a uniform estimate (with respect to the sup-norm)
for functions $f \in \CcG$ with $\supp(f) \subset z+Q$, for some $z \in \cG$.
The assumptions concerning the family  $(x_i)_{i \in I}$ then imply, that
one has for the compact set $K = \supp(p)$ the following
 a uniform bound ({\it independent of $z$}):
$$ \# F =  \# \{ i \in I,  (x_i + I) \cap (z+K) \neq \emptyset\}
\leq C(X) < \infty.
$$

  \noindent
Using $\supp(\pxi\cdot f) \subseteq \supp(\pxi) = x_i + K$
and $\linfnorm{\pxi \cdot f} \leq \linfnorm{f}$ we conclude
\begin{equation} \label{sigestim04}
 |\sigma (f)| \leq  \sumiI  |(\mu_i \cdot \pxi)(f)|
=  \sumiF |\mu_i(\pxi \cdot f)| 
\leq C(X) \sup_{i \in I} \Mbnorm{\mu_i}
\linfnorm{f}.
\end{equation}
%
\end{proof}

%

\begin{remark} \label{katzrem}
This definition appears to be different from the setting
chosen in Katznelson's book \citeX{ka76}, p.127.
He assumes only instead of condition (1) that one has
a continuous embedding $\BspN \hkr \Liloc(\Rdst)$.
But due to the translation invariance property (2) imposed
on the norm of $\BspN$ this implies immediately that one
has $\BspN \hkr \WLilin$, which is a closed subspace
of $\WMlinf$ (the usual characterization of the
dual of $\WCOlisp$ in the context of Wiener amalgam spaces).

Conversely one can show that the continuous shift property
implies that in the case that $\LiGN$ is defined in the
usual way with the help of Lebesgue integration combined
with the existence of a Haar measure on $\cG$, the continuous
shift property (3) in conjunction with (1) and (2) actually
imply that $\Bsp$ is contained in the subspace
$\WLilin \subset \WMlinf$.
\end{remark}

Equipped with these spaces, which can be described now for any LCA
group $\cG$ without the use of the Haar measure or structure theory
we can give a definition of a homogeneous Banach space on $\cG$ (HBSG).
\begin{definition} \label{HomBSPG}
A Banach space $\BspN$ is called a {\it homogeneous Banach space}
on a LCA groups $\cG$  (HBSG) if one has
\begin{enumerate}
\item $\BspN \hkr \Wsp^*$;
\item Translation is isometric on $\BspN$, i.e.
$$ \tblue{ \Bnorm{T_x f} = \Bnorm{f}, \quad \forall f \in \Bsp, x \in \cG;}$$
\item Translation is strongly continuous on $\BspN$, i.e.\
$$ \tblue{ \lim_{x \to 0} \Bnorm{T_x f -f } = 0, \quad \forall f \in \Bsp}.$$
\end{enumerate}
\end{definition}

The following lemma provides a connection between the different notions.
For simplicity we formulate the result for $\cG = \Rdst$, endowed with
the Lebesgue integral. It is valid for general LCA groups. 
\begin{lemma} \label{Katzhomog}
For any HSBG on  $\cG = \Rdst$  we have $\BspN \hkr \Liloc(\Rdst)$.
\end{lemma}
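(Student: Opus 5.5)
The plan is to use the module structure from Theorem \ref{conv-meas-homBR0}: since translation is isometric and strongly continuous on $\BspN$ (Definition \ref{HomBSPG}), $\rho(x)=T_x$ turns $\Bsp$ into a Banach module over $\MGN$, hence over $\LiRdN$. Take a bounded approximate unit $(h_n)$ in $\CcRd$, non-negative, with $\linorm{h_n}=1$ and $\supp(h_n)\to\{0\}$. The first step is to show $h_n\ast f\to f$ in $\BspN$. Here the Riemann-sum construction of the action is used: for a BUPU $\Psi$ fine enough, $\Bnorm{\DPsi(h_n)\rhobul f-f}\le\sum_i h_n(\psii)\Bnorm{T_{x_i}f-f}$, which is small once all centers lie in a small neighborhood of $0$ where $\Bnorm{T_xf-f}<\veps$. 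Letting $|\Psi|\to0$ then gives $\Bnorm{h_n\ast f-f}\le\veps$.

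The second step is to show that $h\ast f$ is a locally integrable (indeed continuous) function for $h\in\CcRd$. Since $\Bsp\hkr\Wsp^*$ and $\Wsp^*$ consists of translation-bounded measures (Lemma \ref{WBdual01}), the convolution $h\ast f$ of a test function with a translation-bounded measure is a continuous function. The bound I aim for is
$$\sup_{z}\int_{z+Q}|h\ast f|\,dx\le C_Q\normta{h\ast f}{\Wsp^*}\le C_Q'\Bnorm{h\ast f}\le C_Q'\linorm{h}\fBN .$$
This is the estimate $\|g\|_{\WMlinf}\le C\|g\|_{\Wsp^*}$ for $g$ a function; it follows from the atomic characterization of $\Wsp^*$ in Lemma \ref{charWspdu}, because $\int_{z+Q}|g|$ is a supremum of $|\langle g,\varphi\rangle|$ over $\varphi\in\Csp_c$ supported in a fixed neighborhood with $\infnorm{\varphi}\le1$, and each such $\varphi$ has $\Wsp$-norm bounded uniformly in $z$.

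The third step combines the two. The sequence $h_n\ast f$ is Cauchy in $\Bsp$, so by the estimate above it is Cauchy in $\Lisp(z+Q)$ for every compact $Q$. Hence it converges in $\Liloc$ to some $g$. It also converges to $f$ in $\Wsp^*$, so as distributions $g=f$. This shows $f\in\Liloc$ with $\sup_z\|f\|_{\Lisp(z+Q)}\le C_Q'\fBN$, which gives the continuous embedding $\BspN\hkr\Liloc(\Rdst)$, and in fact into $\WLilin$.

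The main obstacle I expect is making sure the convolution $h\ast f$ defined abstractly via $\rhobul$ coincides with the concrete convolution of the measure $f\in\Wsp^*$ with $h$. I would check this by pairing with $\varphi\in\CcRd$. The pairing is continuous on $\Bsp$ through $\Wsp^*$, so it passes to the limit of the Riemann sums $\sum_i h(\psii)T_{x_i}f$, and these sums converge $\wst$ to the usual $h\ast f$ by Proposition \ref{Dpsiconv03}.
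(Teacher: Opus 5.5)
Your argument is correct, but it takes a different route from the paper. The paper gives a two-line reduction. Since $\Bsp$ is an essential $\LiRd$-module, the Cohen--Hewitt factorization theorem writes every $f\in\Bsp$ exactly as $f=g\ast h$ with $g\in\LiRd$ and $h\in\Bsp\subset\WMlinf$. The amalgam convolution relation $\LiRd\ast\WMlinf\subset\WLilin\subset\Liloc$ from \cite{fe83} then finishes the proof.

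You make two replacements. First, you replace exact factorization by density, $h_n\ast f\to f$, which you prove directly from the Riemann-sum construction; this is a hands-on version of Corollary \ref{BAILiconv1}. Second, you replace the amalgam convolution theorem by an elementary observation: on continuous functions the $\Wsp^*$-norm dominates $\sup_z\int_{z+Q}|g|$. Hence a $\Bsp$-Cauchy sequence of continuous functions $h_n\ast f$ converges in $\WLilin$. In effect you use the closedness of $\WLilin$ in $\WMlinf$, which the paper only mentions in Remark \ref{katzrem}.

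The paper's route is shorter but relies on two nontrivial external theorems. Yours is self-contained and gives an explicit constant for $\Bsp\hkr\WLilin$. It also makes explicit the identification of the abstract action $\rhobul$ with the concrete convolution. The paper's inclusion $\LiRd\ast\Bsp\subset\LiRd\ast\WMlinf$ tacitly needs that identification as well.

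Two small points of attribution. The norm comparison follows from the definition of the $\Wsp$-norm, since only boundedly many $\psi_i$ meet $z+Q'$; it does not come from Lemma \ref{charWspdu}. And Prop.~\ref{Dpsiconv03} only gives $\DPsi\mu_h\to\mu_h$. To identify the limit of $\sum_i h(\psi_i)T_{x_i}f$ with $h\ast f$ you still need the easy interchange $\int h(x)\langle f,T_{-x}\varphi\rangle\,dx=\langle f,\int h(x)T_{-x}\varphi\,dx\rangle$. For this, note that all $\DPsi\mu_h$ with $|\Psi|\le 1$ are supported in one compact set. So the bounded continuous function $x\mapsto\langle f,T_{-x}\varphi\rangle$ may be cut off to an element of $\Ccsp$.
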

\begin{proof}
In the current situation the abstract results imply  that
$\BspN$ is an essential Banach module over $\LiRd$ with respect
to convolution. By means of the Cohen-Hewitt
factorization Theorem (see  \citeX{hero70})  any $f \in \Bsp$ can be written as $f = g \ast h$, for some $g \in \LiRd$ and
some $h \in \Bsp \subset \WMlinf(\Rdst)$. But
the convolution relations for Wiener amalgams established
in \citeX{fe83} imply (altogether):
\begin{equation}\label{HBSG01}
\Bsp = \LiRd \ast \Bsp \subset \LiG \ast \WMlinf(\Rdst) \subset \WLilin(\Rdst)
\subset \Liloc(\Rdst).
\end{equation}
\end{proof}





\section{Homogeneous Banach Spaces as essential $\Lisp$-modules}
\label{secHBS}

Let us start with the comment that   the so-called {\it regular
representation of a group} $\cG$, i.e.\ the mapping which assigns
to any $x \in \CG$ the (left) translation operator $T_x$\footnote{This
operator is denoted by $L_x$ in \citeX{re68}.} is of course one
of the most important cases for the application of the abstract
principle developed in Section \ref{secHBS}.

It is also clear that the general assumptions which allow to
invoke Thm.  \ref{conv-meas-homBR0} are satisfied for any homogeneous
Banach space on $\cG$. Since such Banach spaces usually contain
many functions from $\CcG$ and since in this case it is clear
that the abstract form of the convolution coincides with the
pointwise action as defined via the pairing of $\COG$ and $\MG$
it is justified to still call the mapping $\mu \rhobul f$ in this
case convolution and simply write $\mu \ast f$. In view of
density considerations it is possible to verify that - in case
there are different possible interpretations of the symbol ``$\ast$''
the result does not depend on the context. 

This {\it seemingly harmless}, but
nevertheless highly non-trivial use of this symbol in situations
which are generated by different technical considerations are well
 justified in all the cases which are considered
here. Occasionally a strict verification of such a claim has to be
undertaken. However,  unlike the  approach taken occasionally by experts in distribution theory we take  care for the ``existence of the convolution product'' at an
individual level\footnote{In such a situation even the associative law
may fail!} we emphasize module actions and bilinear pairings for Banach
spaces, which are obtained by extension of standard operations.

We thus can summarize our findings so far in the following theorem:
\begin{theorem} \label{MGhomogBR1}
Let $\BspN$ be a homogeneous Banach space of an LCA group $\cG$. Then
$\BspN$ is a Banach module over $\MGN$ with respect to convolution.
In fact, the action of $\mu$ on $f \in \Bsp$ is defined as the
limit of expressions of the form $\DPsimu \ast f$, in the norm of $\BspN$.
\end{theorem}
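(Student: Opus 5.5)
The plan is to reduce Theorem \ref{MGhomogBR1} to Theorem \ref{conv-meas-homBR0}, applied to the regular representation $\rho(x) = T_x$ on $\BspN$. Conditions (2) and (3) of Definition \ref{HomBSPG} state exactly that $x \mapsto T_x$ is an isometric, strongly continuous representation of $\cG$ on $\BspN$. The group-homomorphism property $T_{x+y} = T_x \circ T_y$ is immediate from (\ref{transldef04}); it holds on $\Wsp^*$ because translation of functionals is defined by duality, and $\Bsp \hkr \Wsp^*$ by condition (1). Hence $\BspN$ is an abstract homogeneous Banach space in the sense of Section \ref{OpBUPUs}.

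Theorem \ref{conv-meas-homBR0} then yields a module action $\mu \rhobul f$ of $\MGN$ on $\Bsp$. This action satisfies the norm estimate (\ref{meas-onB1}) and the associativity law (\ref{convassoclaw}). Its proof also gives the explicit limit description: $\mu \rhobul f = \lim_{\sPsitoz} \DPsimu \rhobul f$ in $\BspN$, where $\DPsimu \rhobul f = \sumiI \mu(\psii) T_{x_i} f$. The arbitrarily fine BUPUs required here exist on any LCA group by Theorem \ref{BUPULCG1}. Moreover, $\DPsimu$ is a bounded discrete measure with $\Mbnorm{\DPsimu} \leq \Mbnorm{\mu}$, and on discrete measures the action is literally the absolutely convergent series $\sum c_k T_{x_k} f$. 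This series is the convolution $\DPsimu \ast f$, so the second sentence of the statement follows.

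It remains to justify calling the limit ``convolution'', that is, to check consistency with the usual pairing where both make sense. For $f \in \Bsp \cap \CcG$ (or $f \in \Bsp \cap \COG$), I would compare the norm limit in $\Bsp$ with the pointwise or sup-norm limit $\DPsimu \ast f \to \mu \ast f$ from (\ref{convconvf}), transferred to $\cG$ via \citeX{fe17}. Both limits are also $\wst$-limits in $\Wsp^*$: the $\Bsp$-limit because of the continuous embedding (1), and the $\COsp$-limit because $\COG \hkr \Wsp^*$ on pairing with $\Wsp$. Uniqueness of $\wst$-limits then identifies them.

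This identification is the step I expect to be the main obstacle. It needs the embedding $\COGN \hkr \Wsp^*$ to be compatible with translation and with discrete convolution. I would check this first on atoms of $\Wsp$ (Proposition \ref{Wienerprops1}(5)) and then extend by density of $\CcG$ in $\Wsp$.
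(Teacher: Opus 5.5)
Your proposal follows the paper's own route: the paper also obtains Theorem \ref{MGhomogBR1} by noting that conditions (2)--(3) of Definition \ref{HomBSPG} make $x \mapsto T_x$ an isometric, strongly continuous representation of $\cG$ on $\BspN$. Theorem \ref{conv-meas-homBR0}, with the arbitrarily fine BUPUs of Theorem \ref{BUPULCG1}, then gives the module action as the norm limit of $\DPsimu \ast f$. Your consistency check via uniqueness of $\wst$-limits in $\Wsp^*$ is a more explicit version of the paper's brief appeal to ``density considerations'', and the sup-norm convergence you import from the Euclidean case can be obtained on any $\cG$ by applying Theorem \ref{conv-meas-homBR0} to $\COGN$ itself, together with $\DPsimu \to \mu$ in the $\wst$-sense.
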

While it is enough to know the Riemann integral (on $\CcRd$) for the
case $\cG = \Rdst$ (or similar elementary LCA groups), we have to invoke
to the existence of the Haar measure on $\cG$, which is a translation
invariant linear functional on $\CcG$ (in fact on $\WCOliG$). It allows
to endow $\CcG$ with the $\Lisp$-norm, and establish that $\CcG$ with
this norm is a normed space. With a little bit of extra work one then
goes on to show that the bounded measure $\mu_k$ induced by $ k \in \CcG$
via the mapping $f \mapsto \intG f(x)k(x)dx$, or better
$f \mapsto H(f\cdot k)$\footnote{Here we write $H$ for the Haar functional,
i.e.\ the linear functional arising in the construction of the so-called
Haar measure (see e.g.\ \citeX{de02}).} is in fact an isometric
embedding from $\CcG$ into $\MGN$. Consequently it makes sense to define
the space $\LiGN$ simply as the closure of $\CcG$, more precisely of
$\{ \mu_k \suth  k \in \CcG \}$ in $\MGN$.

Continuing our efforts to develop the foundations of harmonic
analysis without the use of measure theory we have to establish
a few basic properties:
\begin{lemma}\label{Liprops}
\begin{enumerate}
\item  
$\LiGN$ is a Banach space;
\item  $\LiGN$ is a homogeneous Banach space;
\item  In fact, $\LiGN$ is a closed ideal in $\MGN$.
\item  $\LiG$ is $\wstd$dense in $\MGN$.
\end{enumerate}
\end{lemma}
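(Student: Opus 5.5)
We take $\LiG$ to be the norm closure in $\MGN$ of $\{\mu_k \suth k \in \CcG\}$, where $\mu_k(f) = H(f\cdot k)$, and we use that $k \mapsto \mu_k$ is an isometric embedding of $\CcG$ (with the $\Lisp$-norm coming from $H$) into $\MGN$. The four claims are then proved in the order stated.

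\emph{Claims (1) and (2).} Claim (1) is immediate: $\LiG$ is a closed subspace of the Banach space $\MGN$. For (2), translation invariance of the Haar functional $H$ gives $T_x \mu_k = \mu_{T_x k}$, so the subspace $\{\mu_k\}$ is invariant under the isometries $\delta_x \star \cdot$. Its closure $\LiG$ is then invariant as well, with $\|T_x\mu\|_\Msp = \Mbnorm{\mu}$. Strong continuity has to be checked only on the dense subspace $\{\mu_k\}$, by the usual $\veps/3$ argument, since the operators $T_x$ are uniformly bounded. For $k \in \CcG$, uniform continuity of $k$ gives $\infnorm{T_x k - k} \to 0$ as $x \to 0$, with supports in a fixed compact set $K$. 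Hence
$$\|\mu_{T_xk-k}\|_\Msp = H(|T_x k-k|) \leq H(p)\,\infnorm{T_xk-k} \to 0,$$
where $p \in \CcG$ is non-negative with $p = 1$ on $K$. The remaining axiom, $\LiG \hkr \Wsp^*$, follows from $\MG \hkr \Wsp^*$, because $\WspN \hkr \COGN$.

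\emph{Claim (3).} This is the central step. By Theorem \ref{MGhomogBR1}, applied to the homogeneous space $\LiG$ with $\rho(x) = T_x$, the expression $\mu \rhobul \nu$ is defined for $\mu \in \MG$ and $\nu \in \LiG$ as the norm limit of $\DPsimu \star \nu$. This limit lies in $\LiG$, since each $\DPsimu \star \nu = \sum_i \mu(\psii) T_{x_i}\nu$ is an absolutely convergent series in the closed space $\LiG$. What has to be shown is that this module action agrees with internal convolution $\mu \star \nu$ in $\MG$. I would verify this by testing against $f \in \COG$. On one side, $\DPsimu \star \nu \to \mu\star\nu$ in the $\wst$-sense, using Proposition \ref{Dpsiconv03} (its analogue for $\cG$) and $\wst$-continuity of $\mu \mapsto \mu \star \nu$ on bounded tight nets. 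On the other side, $\DPsimu \star \nu$ converges in norm to $\mu\rhobul\nu$. Both limits must therefore coincide, so $\mu \star \nu = \mu \rhobul \nu \in \LiG$. I expect this identification of the two convolutions to be the main obstacle, because it requires $\wst$-continuity of internal convolution along the tight net $(\DPsimu)$, as established in \cite{fe17}. Commutativity of $\cG$ gives the two-sided ideal property.

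\emph{Claim (4).} Take a bounded approximate identity: nonnegative $k_U \in \CcG$ with $\supp(k_U) \subseteq U$ and $H(k_U) = 1$. For $f \in \COG$, uniform continuity gives $\mu_{k_U} \star f \to f$ uniformly. Hence, for $\mu \in \MG$,
$$(\mu_{k_U}\star\mu)(f) = \mu(\tilde k_U \star f) \to \mu(f),$$
where $\tilde k_U$ is the reflected kernel. Since $\mu_{k_U}\star\mu \in \LiG$ by (3), this proves $\wst$-density. Alternatively one can approximate $\DPsimu$ directly by $\sum_i \mu(\psii)\, T_{x_i}\mu_{k_U}$.
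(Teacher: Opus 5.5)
Your proposal is correct. Parts (1) and (2) follow the paper; you only make explicit $T_x\mu_k=\mu_{T_xk}$ and the bound $H(|T_xk-k|)\le H(p)\infnorm{T_xk-k}$. For (3) and (4) you take a different route.

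\textbf{Part (3).} The paper argues internally. For compactly supported $\mu$ and $k\in\CcG$, the function $\mu*k(x)=\mu(T_xk^{\checkmark})$ lies in $\CcG$. The estimate $\linorm{\mu*k}=\Mbnorm{\mu\star\mu_k}\le\Mbnorm{\mu}\linorm{k}$, together with norm approximation of $\mu$ by $\sum_{i\in F}\mu\psii$ and density of $\CcG$ in $\LiG$, then gives the ideal property. That route tacitly uses the identification $\mu\star\mu_k=\mu_{\mu*k}$, which is a Fubini-type interchange of $\mu$ and the Haar functional $H$.

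You instead let Theorem~\ref{MGhomogBR1} produce $\mu\rhobul\nu$ as a norm limit inside the closed space $\LiG$. You then identify it with $\mu\star\nu$ through uniqueness of $\wst$-limits. This is exactly the compatibility of external and internal action that the paper says must be verified, and it needs no Fubini argument.

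Both of your inputs are available on a general $\cG$:
\begin{itemize}
\item $\DPsimu(f)-\mu(f)=\mu(\SpPsi f-f)$, and $\infnorm{\SpPsi f-f}\to0$ by uniform continuity of $f$.
\item $\mu_1\mapsto\mu_1\star\nu$ is $\wst$-continuous along arbitrary nets, so tightness is not needed. This is because $(\mu_1\star\nu)(f)=\mu_1(g)$ for some $g\in\COG$ depending only on $\nu$ and $f$.
\end{itemize}

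\textbf{Part (4).} The paper approximates each $\delta_x$ by normalized bumps $\mu_k$ and reaches all of $\MG$ through the $\wst$-density of discrete measures. Your approximate-unit argument makes (4) depend on (3). In exchange, it produces an approximating net $\mu_{k_U}\star\mu$ in $\LiG$ that is bounded by $\Mbnorm{\mu}$, which the paper's transitivity argument does not give explicitly.
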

\begin{proof}
By definition $\LiGN$ is a closed subspace subspace of $\MGN$
and hence complete, and thus a Banach space. The uniform continuity
of any $k \in \CcG$ implies that $\linfnorm{T_x k - k} \to 0$
for $x \to 0$. Due to (joint) compact support of all these
translates (for $x$ near $0$, resp.\ the neutral element $e \in \cG$)
one also has $\lim_{x \to 0} \linorm{T_x f- f} = 0$ for $f \in \LiG$
by approximation. Due to the continuous embedding
$ \WCOliGN \hkr \COGN$ it is clear that $\LiG$ is contained in
$\Wsp^*$, and thus the formal axioms for an HBSG are satisfied.

As a consequence of Thm. \ref{MGhomogBR1} it is also an $\MG$-module
with respect to convolution and thus a closed ideal in $\MGN$, once
it is verified that the external action of $\MGN$ on $\LiGN$
is compatible with the internal (e.g. obtained by a pointwise
definition of $f \ast g(x)$ for $f,g \in \CcG$, or using Lebesgue
integration).  Observe that the convolution of a compactly supported measure
$\mu \in \MGN$ with $k \in \CcG$ is a continuous function in $\CcG$
and thus, by taking limits, $\LiG$ is a closed ideal of $\MGN$.
The pointwise relation
$\mu \ast k(x) = \mu(T_x k\checkm)$  implies
\begin{equation} \label{suppconv02}
\supp(\mu \ast k) \subset \supp(\mu) + \supp(k).
\end{equation}
Since any measure $\mu \in \MGN$ can be approximated by finite sums
of the form $\sumiF \mu \psii$ (in the norm of $\MG$) the
obvious estimate
\begin{equation}\label{L1convest05}
\linorm{\mu \ast k} = \Mbnorm{ \mu \star \mu_k}
\leq \Mbnorm{\mu} \Mbnorm{\mu_k} = \Mbnorm{\mu} \linorm{k},
\quad \mu \in \MG, \, k \in \CcG,
\end{equation}
we see that $\LiGN$ is a closed ideal in $\MGN$.

In order to verify the $\wstd$density of $\LiG$ in
$\MG$ it is enough to check that it is possible
to find an approximation of Dirac-measures in the
$\wstd$sense, because this implies the possibility
to approximate discrete measures by elements of $\LiG$
(in fact by elements in $\CcG$)  by transitivity. 

In fact, given $h \in \COG$ and $x \in \cG$ the uniform
continuity of $h$ implies that $\delx$ can be approximated
well by non-negative functions $k \in \CcG$ with small
support $U$ centered around $x$. In fact,  assuming
$ \intG k(x)dx =1 $ (just a normalization)
it is easy to estimate the difference
\begin{equation}\label{Diracwst01}
|\delta_x(f) - \mu_k(f)| =
| 1\cdot f(x)-\intG f(y) g(y)dy|\leq \int_U |f(x)-f(y)||k(x)|dx < \veps.
\end{equation}
\end{proof}

Next we can also recall the definition of a Segal algebra:
\begin{definition}  \label{Segaldef03}
A Banach space $\BspN$, which is continuously and densely embedded
into $\LiGN$, and which is also a homogeneous Banach space, is called
a \tred{\it Segal algebra} (in Reiter's sense, see \citeX{re68,rest00}).
\end{definition}

Our knowledge so far implies immediately the following claim:
\begin{lemma} \label{SegalIdeal01}
Any Segal algebra $\BspN$ is a so-called {\it Banach ideal}
in $\LiGN$, i.e.\ it is a Banach space with its own right,
and an (left) ideal in $\LiGN$, satisfying the estimate
\begin{equation}\label{LiSegEst1}
\Bnorm{g \ast f} \leq \linorm{g} \Bnorm{f}, \quad g \in \LiG, f \in \Bsp.
\end{equation}
\end{lemma}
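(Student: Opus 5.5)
The plan is to obtain Lemma \ref{SegalIdeal01} as a direct consequence of Thm. \ref{MGhomogBR1}, restricted to the ideal $\LiG \subset \MG$ from Lemma \ref{Liprops}. A Segal algebra $\BspN$ is by Definition \ref{Segaldef03} a homogeneous Banach space. (Here we use that $\Bsp \hkr \LiG \hkr \MG$, and that $\LiG$ sits inside $\Wsp^*$ as shown in the proof of Lemma \ref{Liprops}, so axiom (1) of Definition \ref{HomBSPG} holds.) Hence Thm. \ref{MGhomogBR1} applies: $\BspN$ is a Banach module over $\MGN$, with
$$ \Bnorm{\mu \ast f} \leq \Mbnorm{\mu}\Bnorm{f}, \quad \mu \in \MG, \ f \in \Bsp .$$
Since $\LiG$ is isometrically embedded in $\MG$ via $k \mapsto \mu_k$, with $\Mbnorm{\mu_g} = \linorm{g}$, restricting the module action to $g \in \LiG$ immediately yields the estimate (\ref{LiSegEst1}). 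Associativity for the restricted action is inherited from (\ref{convassoclaw}).

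The substantive point is that the abstract action $\mu_g \rhobul f$ coincides with the internal convolution $g \ast f$ computed in $\LiG$. Only then is "$\Bsp$ is a left ideal in $\LiG$" a statement about the algebra $\LiGN$ itself. I would argue this as follows.

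First, for $f \in \Bsp$ the approximating discrete sums satisfy
$$ D_\Psi \mu_g \rhobul f = \sumiI \mu_g(\psi_i)\, T_{x_i} f .$$
These converge in $\Bsp$ by Thm. \ref{MGhomogBR1}, and hence also in $\LiG$, since $\Bsp \hkr \LiG$.

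Second, I would consider the same sums as elements of $\MG$, namely
$$ D_\Psi\mu_g \star \mu_f = \Big(\sumiI \mu_g(\psi_i)\delta_{x_i}\Big) \star \mu_f .$$
Applying Thm. \ref{MGhomogBR1} to the homogeneous Banach space $\LiGN$ (Lemma \ref{Liprops}(2)), these converge in $\Lisp$-norm to $\mu_g \ast f$ in the $\LiG$-module sense. That limit equals the internal convolution $\mu_g \star \mu_f$ in $\MG$, by the compatibility already verified in the proof of Lemma \ref{Liprops}.

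Third, since both limits are taken in the Hausdorff space $\LiG$, they coincide. This gives $g \ast f \in \Bsp$ and identifies the two actions.

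I expect this compatibility step to be the main obstacle, namely making precise that the limit in $\LiG$ agrees with the internal product $\mu_g \star \mu_f$. If the argument above is too loose, I would first prove it for $g, f \in \CcG$ by direct pointwise evaluation against test functions in $\COG$, using $\mu \ast k(x) = \mu(T_x k\checkm)$. I would then extend by density and continuity of both sides in $\linorm{g}$ and $\Bnorm{f}$.

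The remaining claim, that $\Bsp$ is a Banach space in its own right, is part of the hypothesis.
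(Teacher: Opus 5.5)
Your proposal is correct and matches the paper's argument, which the paper only states as ``immediate'': a Segal algebra is a homogeneous Banach space, so Thm.~\ref{MGhomogBR1} makes $\Bsp$ a Banach module over $\MG$, and restricting to the closed ideal $\LiG\subset\MG$, where $\Mbnorm{\mu_g}=\linorm{g}$, gives (\ref{LiSegEst1}). Your comparison of the same discrete sums $\sum_{i\in I}\mu_g(\psi_i)T_{x_i}f$ in $\Bsp\hkr\LiG$ usefully spells out the identification $\mu_g\rhobul f=\mu_g\star\mu_f$, which the paper only asserts in passing.

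One caution for your fallback: do the density extension inside $\LiG$, where $\CcG$ is dense by definition, and then transfer to $\Bsp$ via your third step. Do not extend by continuity in $\Bnorm{f}$, because a Segal algebra need not contain any nonzero compactly supported function; an example is $\{f\in\Lisp(\Rst): \hatf(\omega)e^{|\omega|}\in\Lisp(\Rst)\}$, whose elements are real-analytic.
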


In order to check that any homogeneous Banach space is an
{\it essential}  Banach module over $\LiGN$ we will prove  our
third main result of this article. We start from the same situation
as in Thm. \ref{conv-meas-homBR0}. The following theorem is inspired by the
results in \citeX{fe77-2}, in particular Thm.2.2 there, where
such result have been shown by different arguments.
\begin{theorem}\label{wstconvB} Given a HBSG and
a bounded and tight net $(\mu_\alpha)_{\alpha \in I}$ in $\MGN$
with
$$ \mu_0 = \wstlimal \mual $$
then one has norm convergence
$$ \limal \Bnorm{  \mual \ast f - \mu_0 \ast f} = 0, \quad f \in \Bsp.$$
\end{theorem}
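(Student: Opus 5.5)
The plan is a three-epsilon argument. Tightness confines the problem to a compact set. There, discretization with a fixed fine BUPU reduces everything to finitely many scalar functionals $\mu(\psi_i)$, and $\wst$-convergence controls those.

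Fix $f \in \Bsp$ and $\epso$, and let $C := \sup_\alpha \Mbnorm{\mual} + \Mbnorm{\mu_0}$. The net being bounded, $C<\infty$; the $\wst$-limit also satisfies $\Mbnorm{\mu_0} \leq \liminf \Mbnorm{\mual}$.

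\textbf{Step 1 (compact support).} By tightness choose $p \in \CcG$ with $0\le p\le 1$ such that $\Mbnorm{p\mual - \mual} \leq \veps$ for all $\alpha$. Enlarging $p$ if needed (replace it by one equal to $1$ on a larger compact set), we may also assume $\Mbnorm{p\mu_0-\mu_0}\le\veps$; this is possible since $\mu_0$ alone is tight. By the norm estimate (\ref{meas-onB1}) of Thm.~\ref{conv-meas-homBR0},
$$\Bnorm{\mual \ast f - (p\mual)\ast f} \leq \veps \Bnorm{f},$$
and likewise for $\mu_0$. The measures $p\mual$ are supported in the fixed compact set $K=\supp(p)$. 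Moreover $p\mual \to p\mu_0$ in the $\wst$ sense, since $h\mapsto ph$ maps $\COG$ into itself.

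\textbf{Step 2 (discretization).} By strong continuity of translation, choose a neighborhood $U$ of $0$ with $\Bnorm{T_z f - f}\le \veps$ for $z \in U$, and pick a $U$-BUPU $\Psi$ via Thm.~\ref{BUPULCG1}. The estimate (\ref{Riemann-estim1}) from the proof of Thm.~\ref{conv-meas-homBR0}, applied in the limit of finer $\Phi$, gives
$$\Bnorm{\nu \ast f - \DPsi\nu \ast f} \le \veps \Mbnorm{\nu}$$
for every $\nu \in \MG$, uniformly in $\nu$. The point is that the bound depends on $\nu$ only through $\Mbnorm{\nu}$, because $\sup_{z\in U}\Bnorm{T_zf-f}$ does not involve the measure at all. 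Apply this with $\nu = p\mual$ and $\nu = p\mu_0$, each with norm at most $C$.

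\textbf{Step 3 (finitely many coefficients).} Since all $p\mual$ are supported in $K$, only the finitely many indices $F=\{i : \supp\psi_i\cap K\ne\emptyset\}$ contribute; finiteness follows from the bounded overlap property together with compactness of $K$. Then
$$\Bnorm{\DPsi(p\mual)\ast f - \DPsi(p\mu_0)\ast f} \le \Bnorm{f}\sum_{i\in F} |\mual(p\psi_i)-\mu_0(p\psi_i)|.$$
This is a finite sum of terms each tending to zero by $\wst$-convergence, because $p\psi_i \in \CcG\subset\COG$. So the sum is below $\veps$ for $\alpha \succeq \alpha_0$.

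Collecting the three steps yields
$$\Bnorm{\mual\ast f-\mu_0\ast f}\le (2\Bnorm{f}+2C+1)\veps$$
eventually, which proves the theorem. The main obstacle is the uniformity in Step 2: one has to ensure that the approximation of $\nu\ast f$ by $\DPsi\nu\ast f$ is controlled by $\Mbnorm{\nu}$ times a modulus of continuity of $f$ alone. This requires rereading (\ref{Riemann-estim1}) carefully, with the $\phi_j$ fine, and passing to the limit. Tightness (Step 1) is what makes Step 3 finite. Without it, the infinite sum over $i$ could not be controlled by $\wst$-convergence; this is exactly where the hypothesis is needed.
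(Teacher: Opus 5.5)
Your proof is correct and follows the paper's own route (the paper proves this through Thm.~\ref{wsttightconv}): tightness localizes the measures, a single fixed fine BUPU gives a discretization error $\Bnorm{\nu\ast f-\DPsi\nu\ast f}$ bounded by a modulus of continuity of $f$ times $\Mbnorm{\nu}$, uniformly in $\nu$, and $\wst$-convergence then controls the finitely many remaining coefficients. Discretizing $p\mual$ instead of $\mual$ makes the paper's ``up to a controllable error we may assume joint compact support'' step precise; one small fix is that in (\ref{Riemann-estim1}) the relevant shifts $y_j-x_i$ lie in $U-U$ (for $\Phi$ finer than $\Psi$), so you should choose $U$ with $\Bnorm{T_zf-f}\le\veps$ for all $z\in U-U$, not only for $z\in U$.
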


The result will be realized in the abstract setting of Thm. \ref{conv-meas-homBR0}.
This is our third main result. It shows that in the current context
for bounded and tight nets in $\MGN$ $\wst$-convergence of measures
is giving strong operator norm convergence of the corresponding
convolution operators. 
\begin{theorem} \label{wsttightconv}
Let $\rho$ be a strongly continuous, isometric representation of
the locally compact group $\cG$ on the Banach space $\BspN$ and
$(\mu_\alpha)_{\alpha \in I}$  a bounded and tight net in $\MGN$
with $ \mu_0 = \wstlimal \mual $. Then one has
\begin{equation}\label{wstconv01}  \tred{
\limal \Bnorm{ \mual \rhobul f - \mu_0 \rhobul f }  = 0, \quad
\forall f \in \Bsp. }
\end{equation}
\end{theorem}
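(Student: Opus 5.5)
Since $\Mbnorm{\mual} \leq C$ for all $\alpha$ and the action satisfies the norm estimate (\ref{meas-onB1}) of Thm.~\ref{conv-meas-homBR0}, all operators $\rho(\mual)$ are uniformly bounded by $C$. This reduces the statement to an ``$\veps/3$''-argument. Fix $f \in \Bsp$ and $\epso$. We compare $\mual \rhobul f$ and $\mu_0 \rhobul f$ with suitable discretized versions that only involve finitely many values $\mual(\psii)$. By $\wst$-convergence these finitely many numbers converge to $\mu_0(\psii)$.

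\textbf{Step 1: localization.} By tightness choose $p \in \CcG$ with $\Mbnorm{p\mual - \mual} \leq \veps$ for all $\alpha$. We may also assume $\Mbnorm{p\mu_0-\mu_0}\leq\veps$; this holds because $\mu_0$ is a single bounded measure, and one can replace $p$ by a larger cut-off dominating both. Then $\Bnorm{(\mual - p\mual)\rhobul f} \leq \veps \fBN$, and similarly for $\mu_0$. It thus suffices to treat the localized measures $p\mual$ and $p\mu_0$, all supported in the fixed compact set $K = \supp(p)$. Note that $p\mual \to p\mu_0$ in the $\wst$-sense, since $h \mapsto ph$ maps $\COG$ into itself.

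\textbf{Step 2: uniform discretization.} By strong continuity (\ref{strongcont2}) there is a neighborhood $U$ of $e$ with $\Bnorm{\rho(y)f - f} \leq \veps$ for $y \in U$. Take a $U$-BUPU $\Psi$ from Thm.~\ref{BUPULCG1}. The Riemann-sum estimate (\ref{Riemann-estim1}) from the proof of Thm.~\ref{conv-meas-homBR0} gives a bound that is \emph{uniform in the measure}:
$$\Bnorm{\nu \rhobul f - \DPsi\nu \rhobul f} \leq \veps \Mbnorm{\nu}.$$
To derive it, pass to the limit along refinements $\Phi$ of $\Psi$ and use the isometry of $\rho$ to write $\Bnorm{\rho(x_i)f - \rho(y_j)f} = \Bnorm{f - \rho(y_j - x_i)f}$. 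We apply this to $\nu = p\mual$ and $\nu = p\mu_0$; the bound is $\leq \veps C'$ for all $\alpha$. This uniformity in $\alpha$ is the key point.

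\textbf{Step 3: finitely many coefficients.} Since $K$ is compact and the family $(x_iU)$ has the bounded overlap property, only a finite set $F \subset I$ of indices has $\psii p \neq 0$. Hence
$$\DPsi(p\mual)\rhobul f - \DPsi(p\mu_0)\rhobul f = \sumiF \big(\mual(p\psii) - \mu_0(p\psii)\big)\rho(x_i)f.$$
Since $p\psii \in \CcG \subset \COG$, each coefficient tends to $0$ by $\wst$-convergence. The whole finite sum therefore has norm at most $\veps$ for $\alpha$ large. Combining Steps 1--3 gives $\limsup_\alpha \Bnorm{\mual\rhobul f - \mu_0\rhobul f} \leq C''\veps$, which proves the claim.

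The main obstacle is Step 2: making the discretization error uniform over the whole bounded family. The limit in Thm.~\ref{conv-meas-homBR0} was taken for each $\mu$ separately. One must check that the estimate (\ref{Riemann-estim1}) indeed depends only on $\Mbnorm{\mu}$ and on the modulus of continuity of $x \mapsto \rho(x)f$. This requires making the footnote identity $\sum_{(i,j)} |\mu(\psii\phi_j)| \leq \Mbnorm{\mu}$ precise for non-negative BUPUs.
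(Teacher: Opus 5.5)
Your proof is correct and is essentially the paper's argument: a discretization error $\|(\nu-\DPsi\nu)\rhobul f\|$ that is uniform over the bounded family (from the Riemann estimate (\ref{Riemann-estim1})), tightness to reduce to finitely many coefficients, and $w^*$-convergence of those finitely many coefficients. The differences are minor. You localize to $p\mual$, $p\mu_0$ before discretizing, whereas the paper discretizes $\mual$ directly and only says the supports are joint ``up to a controllable error.'' You also justify the uniformity in $\alpha$, which the paper merely asserts; there, since $y_j-x_i\in U-U$ for the refinement pairs, choose $U$ with $U-U$ inside the continuity neighborhood of $y\mapsto\rho(y)f$.
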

\begin{proof}
Given $\epso$ and $f \in \Bsp$ we have to find $\alpha_0$ such that
$\alpha \succeq \alpha_0$ implies
\begin{equation} \label{targetconv01}
\Bnorm{ \mual \rhobul f - \mu_0 \rhobul f } \leq \veps.
\end{equation}
For convenience we assume that $\Bnorm{f}$ = 1.

By Thm.\ref{conv-meas-homBR0} we can find $\UUG$ such that
for any $U-BUPU \, \Psi$ one has
\begin{equation} \label{discappr}
\Bnorm{ (\DPsi \mual - \mual) \rhobul f}  =
\Bnorm{\mual \rhobul f - \DPsi\mual \rhobul f} \leq \veps/4
\quad \forall  \alpha \in \{I,0\}.
\end{equation}
Let us now fix one such BUPU $\Psifam$. 
By the (definition) of tightness we find that there
exist compactly supported functions
$p \in \CcG$\footnote{One should think of plateau like
functions, as they arise e.g.\ as {\it finite} partial sums of the
form $\sumiF \psii$ from any BUPU $\Psi$ on $\cG$.}
such that
\begin{equation}\label{tightuse01}
\Mbnorm{ \mual \cdot p - \mual} \leq \veps/4, \quad
\forall \alpha \in I.
\end{equation}
By taking limits the estimate (\ref{tightuse01})
will be also valid for $\alpha = 0$ (the limit measure $\mu_0$).

Thus, up to a controllable error we may  assume
that the measures $\mualiI$ (and their limit $\mu_0$) have joint
compact support, 
and consequently there exists some finite set $F \subset I$ such that
$\mual(\psii) = 0$ for $i \in I \setminus F$, for all $\alinI$ and
$\alpha = 0$.

By the assumed $\wstd$convergence of the net $\mualiI$ we can find
some index $\alpha_0$ with
\begin{equation}\label{DPsiconv05}
\sumiF | \mu_\alpha(\psii) - \mu_0(\psii)|
\leq \veps/4, \quad \forall \alpha  \succeq \alpha_0,
\end{equation}
which in turn implies that we have for $\alpha \succeq \alpha_0$:
\begin{equation}\label{DPsiconv06}
\Bnorm{ \DPsi \mual \rhobul f -  \DPsi \mu_0 \rhobul f }
\leq \sumiF |\mu_\alpha(\psii)-\mu_0(\psii)|\Bnorm{\rho(x_i)f}
\leq \veps/4.
\end{equation}
By combining the estimates (\ref{discappr}),  (\ref{DPsiconv05})
and (\ref{DPsiconv06}) we have verified (\ref{targetconv01}), i.e.\
we can estimate \tblue{$ \Bnorm{(\mual - \mu_0) \rhobul f} $
} in the following way:
\[
\tblue{
\leq \Bnorm{ (\mual  - \DPsi \mual) \rhobul f} +
\Bnorm{(\DPsi\mual - \DPsi \mu_0)\rhobul f} + \newline
\Bnorm{(\DPsi \mu_0  - \mu_0) \rhobul f} \leq 3\veps/4.
}
\] 
\end{proof}

There are many applications of this rather strong statement, so that
we present only a striking one. As it is well known,  bounded approximate
units in $\LiGN$ are obtained by taking a sequence (or net) of
non-negative (for simplicity) functions $(k_\alpha)_{\alinI}$
with shrinking support and with $\intG k_\alpha(x) =1$ for all $\alinI$.
Such a sequence is often called a {\it Dirac sequence} in the literature,
and it is obviously tight and bounded in $\LiGN$.
It is a simple  exercise to verify that $\mual := \mu_{k_\alpha}$
is then a $\wstd$convergent net with
\begin{equation} \label{Diracsequ1}
\wstlimal \mual = \delta_0.
\end{equation}
As a consequence we thus have:
\begin{corollary} \label{BAILiconv1}
Given the situation of Thm. \ref{wstconvB}, and a bounded
approximate unit $(g_\alpha)_{\alinI}$  in $\LiGN$. Then
one has
\begin{equation}\label{BAIHBGS1}
\tred{ \limal \Bnorm{ g_\alpha \rhobul f - f} = 0,
\quad \forall f \in \Bsp.}
\end{equation}
\end{corollary}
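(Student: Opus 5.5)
The plan is to reduce Corollary \ref{BAILiconv1} to Theorem \ref{wsttightconv}, equivalently Theorem \ref{wstconvB}. Concretely, I would show that the net of measures $\mual := \mu_{g_\alpha}$ is bounded, tight and $\wstd$convergent to $\delta_0$ in $\MGN$. Since $\delta_0 \rhobul f = \rho(0) f = f$ by (\ref{rhobuldelta}), the conclusion (\ref{BAIHBGS1}) then follows at once. Boundedness is immediate, because $\LiGN$ sits isometrically in $\MGN$ and a bounded approximate unit satisfies $\sup_\alpha \linorm{g_\alpha} < \infty$.

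The main obstacle is that a general bounded approximate unit need not be a Dirac net: it need not be non-negative, compactly supported or tight. I would therefore first prove the claim for one specific Dirac net $(k_\beta)$, and then transfer it to an arbitrary $(g_\alpha)$ by a module argument.

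\textbf{Step 1 (a Dirac net).} Take non-negative $k_\beta \in \CcG$ with supports shrinking to $0$ and $\intG k_\beta = 1$. This net is bounded, and tight, since all supports lie in one fixed compact set. By (\ref{Diracsequ1}), or the estimate (\ref{Diracwst01}), it converges $\wstd$ to $\delta_0$. Theorem \ref{wsttightconv} then gives $\Bnorm{k_\beta \rhobul h - h} \to 0$ for every $h \in \Bsp$. In particular $\Bsp_e := \{ k \rhobul h \suth k \in \LiG, h \in \Bsp \}$ is dense in $\Bsp$.

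\textbf{Step 2 (transfer to a general approximate unit).} For $f = k \rhobul h$ with $k \in \LiG$, use the associativity law (\ref{convassoclaw}) together with the fact that $\LiG$ is an ideal (Lemma \ref{Liprops}). This gives
\[
g_\alpha \rhobul f - f = (g_\alpha \star k - k) \rhobul h ,
\]
and so (\ref{meas-onB1}) yields $\Bnorm{g_\alpha \rhobul f - f} \leq \linorm{g_\alpha \star k - k}\, \Bnorm{h} \to 0$, by the approximate unit property in $\LiGN$. For general $f$, choose $f' = k \rhobul h$ with $\Bnorm{f-f'} \leq \veps$ by Step 1. Then
\[
\Bnorm{g_\alpha \rhobul f - f} \leq (\sup_\alpha \linorm{g_\alpha} + 1)\,\veps + \Bnorm{g_\alpha \rhobul f' - f'} ,
\]
using (\ref{meas-onB1}) once more. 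This is small for $\alpha$ large.

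The only point needing care is the identification $g_\alpha \rhobul (k \rhobul h) = (g_\alpha \star k)\rhobul h$, where $\star$ is the internal convolution in $\LiG \subset \MG$. This is exactly the associativity in Theorem \ref{conv-meas-homBR0}, applied to $\mu_1 = \mu_{g_\alpha}$ and $\mu_2 = \mu_k$, so it introduces no additional difficulty.
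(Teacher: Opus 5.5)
Your proof is correct, but it proves more than the paper's argument does. The paper's proof is the paragraph preceding the corollary. It takes the approximate unit to be a Dirac net (non-negative, shrinking supports, $\int k_\alpha = 1$), notes that such a net is bounded, tight and $w^*$-convergent to $\delta_0$ as in (\ref{Diracsequ1}), and applies Theorem~\ref{wsttightconv} with $\mu_0=\delta_0$; nothing more is done.

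You use that observation only for one auxiliary Dirac net, to get density of $\LiG\rhobul\Bsp$ in $\Bsp$. You then pass to an arbitrary bounded approximate unit by the standard essential-module argument: associativity (\ref{convassoclaw}), the bound (\ref{meas-onB1}), and $\linorm{g_\alpha\star k-k}\to 0$.

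This gains real generality, because a bounded approximate unit need not be tight. For instance, on $\Rst$ take $k_n\ge 0$ with $\supp k_n\subseteq[0,1/n]$ and $\int k_n=1$, and set $g_n=k_n+T_n(k_n-T_{1/n}k_n)$. Then $\linorm{g_n\star k-k}\le\linorm{k_n\star k-k}+\linorm{k-T_{1/n}k}\to 0$ and $\linorm{g_n}\le 3$. Yet a mass of size $2$ escapes to infinity, so $(g_n)$ is bounded but not tight. The paper's one-line reduction therefore literally covers only Dirac-type (tight) approximate units, while your Step~2 proves the corollary as stated.

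The price is that you rely on the associativity law for elements of $\LiG$, whose proof in the paper is only sketched (cf.\ Corollary~\ref{iterconv}). The paper's route needs nothing beyond Theorem~\ref{wsttightconv}.
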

As pointed out in Section 5.2 of \cite{fe17}  the net $\DPsimu$ is
providing a tight $\wstd$approximation to  $\mu$. Combining this fact
with the iteration principle (see \cite{ke75}, p.69) for convergent nets,
we come up with a verification of the associativity law which is required
for Banach modules.
\begin{corollary}\label{iterconv}
In the situation of Thm. \ref{wsttightconv} let $\mu_1, \mu_2 \in \MG$ be given. Then  %
\begin{equation}\label{Dpsicompo}
  \lim_{\sPsitoz} \Bnorm{ \DPsi\mu_1 \rhobul (\DPsi\mu_2 \rhobul f) -
  (\mu_1 \star \mu_2) \rhobul f } = 0, \quad f \in \Bsp.
\end{equation}
\end{corollary}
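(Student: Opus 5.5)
Since \(\rho\) is a representation of \(\MdG\) and both \(\DPsi\mu_1\) and \(\DPsi\mu_2\) are discrete measures, (\ref{rhobulass1}) already gives
\[
\DPsi\mu_1 \rhobul (\DPsi\mu_2 \rhobul f) = (\DPsi\mu_1 \star \DPsi\mu_2)\rhobul f .
\]
The plan is therefore to show that the net of discrete measures \(\nu_\Psi := \DPsi\mu_1 \star \DPsi\mu_2\), indexed by \(|\Psi| \to 0\), is bounded, tight and \(\wst\)-convergent to \(\mu_1\star\mu_2\). Theorem \ref{wsttightconv} applied to \((\nu_\Psi)\) then yields (\ref{Dpsicompo}) directly.

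Boundedness is immediate from (\ref{normcontr2}) and submultiplicativity of \(\MGN\):
\[
\Mbnorm{\nu_\Psi}\le \Mbnorm{\mu_1}\Mbnorm{\mu_2}.
\]

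Tightness follows from the tightness of each family \((\DPsi\mu_k)_{|\Psi|\le 1}\), as recorded in Section 5.2 of \cite{fe17} and in Proposition \ref{Dpsiconv03}. Given \(\epso\), choose \(p_1,p_2\in\CcG\) such that \(\Mbnorm{\DPsi\mu_k - p_k\DPsi\mu_k}\le\veps\). Writing \(\nu_\Psi\) as the product of the truncated pieces plus error terms, the truncated product is supported in the compact set \(\supp(p_1)\supp(p_2)\). Choosing a plateau function \(p\) equal to \(1\) there gives \(\Mbnorm{\nu_\Psi - p\,\nu_\Psi} \le C\veps\) uniformly in \(\Psi\).

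For the \(\wst\)-convergence I intend to avoid a general claim that \(\star\) is jointly \(\wst\)-continuous, which is false in general. Instead I would use the case \(\Bsp=\COG\), where convolution acts via the regular representation. For \(h\in\COG\),
\[
\nu_\Psi(h) = \big(\DPsi\mu_1\star \DPsi\mu_2\big)\ast h\check{}\,(0) = \DPsi\mu_1\ast(\DPsi\mu_2\ast h\check{})(0).
\]
By Theorem \ref{wsttightconv} for \(\COGN\), we have \(\DPsi\mu_2\ast h\check{}\to \mu_2\ast h\check{}\) in sup-norm. Since \(\Mbnorm{\DPsi\mu_1}\) is bounded, it then suffices that \(\DPsi\mu_1\ast g\to\mu_1\ast g\) uniformly for the fixed function \(g=\mu_2\ast h\check{}\), which is again Theorem \ref{wsttightconv}. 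A standard \(\veps/2\) split of the two errors gives \(\nu_\Psi(h)\to(\mu_1\star\mu_2)(h)\). Alternatively, one can invoke the iteration principle of \cite{ke75} for the double limit and then pass to the diagonal.

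The main obstacle is this \(\wst\)-convergence step. The two discretizations use the same \(\Psi\), so this is a diagonal limit, and the joint limit has to be justified. That is exactly where uniform bounds and tightness are used: they make the convergence in the second factor uniform with respect to the first. Once the \(\wst\)-convergence of \(\nu_\Psi\) is established, Theorem \ref{wsttightconv} closes the argument. The same estimate also confirms the associativity law (\ref{assocat}), since the left-hand side of (\ref{Dpsicompo}) also tends to \(\mu_1\rhobul(\mu_2\rhobul f)\) by the norm bound (\ref{discrmeasconv1}).
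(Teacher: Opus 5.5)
Your proof is correct, and it is organized differently from the paper's, which is only a one-line sketch. The paper states that $(\DPsi\mu)$ is a tight $\wst$-approximation of $\mu$ and then appeals to Kelley's iteration principle for convergent nets.

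You instead proceed in four steps. First, via the discrete associativity (\ref{rhobulass1}), you reduce everything to the single diagonal net $\nu_\Psi=\DPsi\mu_1\star\DPsi\mu_2$. Second, you check boundedness and tightness of $(\nu_\Psi)$ by hand. Third, you derive its $\wst$-convergence to $\mu_1\star\mu_2$ from the case $\Bsp=\COG$. Finally, you apply Theorem~\ref{wsttightconv} once.

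What this buys is a real treatment of the fact that the same $\Psi$ occurs in both factors. Kelley's theorem by itself only gives convergence to the iterated limit along a net indexed by a product directed set, not along $\Psi\mapsto(\Psi,\Psi)$. Your $\veps/2$ split controls the diagonal directly.

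In that split, the needed uniformity comes only from $\Mbnorm{\DPsi\mu_1}\le\Mbnorm{\mu_1}$ together with the sup-norm convergence $\DPsi\mu_2\ast h^\checkmark\to\mu_2\ast h^\checkmark$. For the other term, plain $\wst$-convergence of $\DPsi\mu_1$ already suffices, since $\DPsi\mu_1\ast g(0)=\DPsi\mu_1(g^\checkmark)$. Tightness enters only in the final appeal to Theorem~\ref{wsttightconv}. Consequently, your ``alternative'' via the iteration principle would need exactly this uniform estimate to pass to the diagonal, so it adds nothing and can be dropped.

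Two details are worth stating explicitly. First, the identity $(\mu_1\star\mu_2)(h)=\mu_1\ast(\mu_2\ast h^\checkmark)(0)$ is the very definition of $\star$ as composition of translation-invariant operators on $\COG$, so the remaining term of your split vanishes by definition. Second, the plateau function $p$ in the tightness step should satisfy $0\le p\le 1$, so that $\Mbnorm{(1-p)\sigma}\le\Mbnorm{\sigma}$ for the error terms.
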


Combining the observations made so far  we come to
 the following final result,
 which shows that the notion of the \tblue{\it integrated
group representation} arises as a consequence of the approach presented
in this paper:
\begin{theorem} \label{HBGessLi1}
Given an isometric, strongly continuous representation of a locally
compact group $\cG$ on a Banach space $\BspN$, the restriction of
the Banach module action of $\MGN$ to the closed ideal $\LiGN$
turns $\BspN$ into an essential Banach module over $\LiGN$.

Conversely, the $\wstd$continuity of the action of $\MG$ for
bounded and tight families implies that the action is all of
$\MG$ is uniquely determined by the {\it integrated group action},
i.e. the $\LiG$-module properties.
\end{theorem}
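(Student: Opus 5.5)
The plan is to split the statement into an essentiality part and a uniqueness part, both driven by Thm. \ref{wsttightconv}.

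\emph{Module structure.} By Thm. \ref{conv-meas-homBR0}, $\BspN$ is a Banach module over $\MGN$, with the norm estimate (\ref{meas-onB1}) and the associativity law (\ref{convassoclaw}). By Lemma \ref{Liprops}, $\LiGN$ is a closed ideal (hence a closed subalgebra) of $\MGN$. Restricting the action to $\LiG$ therefore makes $\BspN$ a Banach $\LiG$-module with $\Bnorm{g \rhobul f} \leq \linorm{g}\Bnorm{f}$.

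\emph{Essentiality.} We must show that $\LiG \rhobul \Bsp$ spans a dense subspace of $\Bsp$; in fact it is enough to show that every $f$ is a norm limit of elements $g_\alpha \rhobul f$. Choose a Dirac net $k_\alpha \in \CcG$, non-negative with $H(k_\alpha)=1$ and supports shrinking to $e$ inside a fixed compact neighborhood $W$. Then $\Mbnorm{\mu_{k_\alpha}} = 1$. The net is tight, since for a fixed $p \in \CcG$ with $p \equiv 1$ on $W$ we have $p\,\mu_{k_\alpha} = \mu_{k_\alpha}$. By the estimate (\ref{Diracwst01}) in Lemma \ref{Liprops}, $\wstlimal \mu_{k_\alpha} = \delta_e$. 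Theorem \ref{wsttightconv} then gives $\Bnorm{k_\alpha \rhobul f - \delta_e \rhobul f} \to 0$. Since $\delta_e \rhobul f = \rho(e)f = f$, this proves essentiality, exactly as in Corollary \ref{BAILiconv1}. The point to check here is that the Haar functional supplies such a normalized net for every neighborhood $U$ of $e$; this needs only $H(k)>0$ for non-zero non-negative $k \in \CcG$.

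\emph{Uniqueness.} Suppose $\mu \mapsto \sigma(\mu)$ is any action of $\MG$ on $\Bsp$ that agrees with ours on $\LiG$, is bounded, and is strongly continuous along bounded tight $\wst$-convergent nets (the continuity proved for our action in Thm. \ref{wsttightconv}). Fix $\mu \in \MG$. I would construct a bounded tight net in $\LiG$ converging $\wst$ to $\mu$. The natural candidate is $\mu \star \mu_{k_\alpha} = \mu \ast k_\alpha \in \LiG$, using the ideal property. It is bounded by $\Mbnorm{\mu}$. It is tight, because tightness of $\{\mu\}$ and the support inclusion (\ref{suppconv02}) for the compactly supported part give a uniform compact set up to $\veps$. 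For the $\wst$-convergence, test against $h \in \COG$: $(\mu\ast k_\alpha)(h) = \mu(\check k_\alpha \ast h)$, and $\check k_\alpha \ast h \to h$ uniformly by uniform continuity of $h$. Applying the continuity of both actions along this net,
\[
\sigma(\mu) f = \lim_\alpha \sigma(\mu\ast k_\alpha) f = \lim_\alpha (\mu \ast k_\alpha)\rhobul f = \mu \rhobul f .
\]
Hence the $\MG$-action is determined by the $\LiG$-action. As an alternative route, one could take $\DPsimu \star \mu_{k}$, combining Prop. \ref{Dpsiconv03} with a Dirac net.

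\emph{Main obstacle.} The delicate step is the identity $(\mu\ast k_\alpha)(h) = \mu(\check k_\alpha\ast h)$ in the uniqueness part. It requires a Fubini-type interchange between the abstract measure $\mu$ and the Haar functional. I would first prove it for discrete measures, where it is just linearity and translation invariance of $H$. I would then pass to general $\mu$ by norm approximation with finite sums $\sum_{i\in F}\mu\psii$ together with the $\DPsi$-approximation, using uniform continuity to control the errors.
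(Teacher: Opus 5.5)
Your proposal is correct. The essentiality half is exactly the paper's argument: the Dirac net $(\mu_{k_\alpha})$ is bounded, tight and $\wstd$convergent to $\delta_e$, so Thm.~\ref{wsttightconv} gives $k_\alpha \rhobul f \to f$, which is Corollary~\ref{BAILiconv1}.

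The paper gives no separate proof of the uniqueness half. The argument it has in mind is Lemma~\ref{Liprops}(4) combined with Thm.~\ref{wsttightconv}. One first approximates point masses $\delta_x$ by translated Dirac-net functions, so the two actions agree on discrete measures. A general $\mu$ is then reached through the tight discretization $\DPsimu \to \mu$; this is essentially your ``alternative route''.

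Your main route instead regularizes $\mu$ directly through the ideal property, giving the single net $\mu \star \mu_{k_\alpha}$ in $\LiG$. This buys you two things:
\begin{enumerate}
\item One application of the continuity hypothesis suffices, and the boundedness and tightness of the approximating net are explicit.
\item The paper's two-stage route needs linearity and boundedness of the competing action to pass from Dirac measures to $\DPsimu$. It also uses more than the bare $\wstd$density asserted in Lemma~\ref{Liprops}(4), namely that the approximating nets there are bounded and tight.
\end{enumerate}

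The price you name, the identity for $(\mu \star \mu_k)(h)$, is smaller than you fear. In the paper, $\star$ is defined through composition of the convolution operators on $\COG$. From that definition one gets directly $(\mu \star \mu_k)(h) = \mu(h_k)$ with $h_k(x) = \int_G k(y)\, h(x+y)\,dy$, and $h_k \to h$ uniformly by uniform continuity of $h$. So no Fubini argument is needed for the $\wstd$convergence. The only point where $\mu \star \mu_k$ must be identified with the function $\mu \ast k$ is the membership $\mu \star \mu_k \in \LiG$, and you may simply quote that from Lemma~\ref{Liprops}(3).
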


\begin{remark}
We think that it is easier to obtain the integrated group representation
of $\LiGN$ on $\BspN$ by way of restriction, instead of going the more
cumbersome way of extending the representation of $\LiGN$ by taking
(vague) limits.
\end{remark}

\section{Some Basic Functional Analysis}   \label{secFA}

An important tool from functional analysis was the fact
that any Banach space is complete with respect to convergence
of \tred{\it Cauchy-net}, not just {\it Cauchy sequences.}

Although Cauchy-nets are (implicitly) appearing in many places,
e.g.\ in the definition of the Riemann integral, they are typically
not discussed as such. The reader could consult  BOURBAKI (\cite{bo04-5})
 for details on nets,  or
\citeX{me98-1} (Prop. 2.1.40), but in
order to make this note  more self-contained let us collect some
relevant facts.

\begin{definition} \label{orient01}
A set $(I,\succeq)$ is called a \tblue{\it directed set} with
respect to the {\it orientation} (given by $\succeq$), if it
satisfies the following properties:
\begin{enumerate}
\item one has transitivity, i.e.\ if
$ \alpha \succeq \beta$ and $\beta \succeq \gamma$ then
$ \alpha \succeq \gamma$;
\item Given $\alpha, \beta \in I$ there exists $\gamma \in I$
such that $\gamma \succeq \alpha$ and $\gamma \succeq \beta$.
\end{enumerate}
\end{definition}
Of course, in many cases one can have a partially ordered set
and choose $\gamma = \max(\alpha,\beta)$ in the above setting,
but this operation need not be meaningful in the general case.

\begin{definition} \label{netdef01}
A {\it net} in a set $X$ is a mapping from a directed set $(I,\succeq)$,
usually described as an indexed family $(x_\alpha)_{\alpha \in I}$.

A net in a metric space $(X,d)$ is called {\it convergent} if there
exists some $x_0 \in X$ such that one has:  Given $\epso$ there exists $\alpha_0$ such that
$$ \alpha \succeq \alpha_0 \Rightarrow  d(x_0,x_\alpha) < \veps. $$
In this case we also write:
$\lim_{\alpha \to \infty} x_\alpha = x_0.$
\end{definition}

Nets are natural generalizations  of sequences (and are thus
often just called {\it generalized sequences}). The analogue
of a Cauchy-sequence is of course a \tblue{\it Cauchy-net}.
\begin{definition} \label{Cauchynet02}
A  net $(x_\alpha)_{\alpha \in I}$ is a {\it Cauchy net} if
for any $\epso$   $\exists \, \alpha_0 \in I$ such that
$$ \alpha,\beta \succeq \alpha_0 \quad \Rightarrow \quad d(x_\alpha,x_\beta) < \veps. $$
\end{definition}

\begin{remark}
Typical nets relevant for our discussion are the nets of the form
$(\Strho g)_{\rho \to 0}$, with $ [\Strho g](x) = \rho^{-d}g(x/\rho)$,
with $\rho_1 \succeq \rho_2$ if $\rho_2 \leq \rho_1$, which are used
to generate {\it Dirac nets} (bounded approximate units) in $\LiRdN$.

Other nets occur naturally is the index set to the Riemannian sums
for an integral of the form $\int_a^b f(x)dx$, given by some finite decomposition of the interval $[a,b]$ and the choice of a family of
points $(\xi_i)$ in the corresponding intervals. As we all know
a Riemannian sum is considered good if the maximal length appearing
in the corresponding decomposition is controlled by a positive value
$\delo$. Furthermore, given two decomposition one can generate the
{\it joint refinement} as a decomposition which is ``better'' than
both of the decompositions generating it.
\end{remark}

\begin{theorem}  \label{Bancompl01}
A normed space  $\BspN$ is complete if and only if any Cauchy net is convergent
in $\BspN$.
\end{theorem}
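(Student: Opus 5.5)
The plan is to prove the two implications separately, working in a normed space $\BspN$ with metric $d(x,y) = \Bnorm{x-y}$.

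\emph{Cauchy nets converge $\Rightarrow$ complete.} This direction is immediate. Every sequence $(x_n)_{n\in\Nst}$ is a net over the directed set $(\Nst,\geq)$. A Cauchy sequence is exactly a Cauchy net in the sense of Definition \ref{Cauchynet02}. Convergence of the net in the sense of Definition \ref{netdef01} is then ordinary convergence of the sequence, so $\BspN$ is complete.

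\emph{Complete $\Rightarrow$ Cauchy nets converge.} Let $(x_\alpha)_{\alpha\in I}$ be a Cauchy net. I will extract a Cauchy sequence, use completeness, and then transfer the limit back to the whole net.

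First, for each $n\in\Nst$ choose $\beta_n\in I$ with $\Bnorm{x_\alpha-x_\beta}<2^{-n}$ for all $\alpha,\beta\succeq\beta_n$. Using property (2) of Definition \ref{orient01}, I replace these recursively by indices $\alpha_n$ satisfying $\alpha_n\succeq\beta_n$ and $\alpha_n\succeq\alpha_{n-1}$.

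By transitivity (property (1)), $\alpha_m\succeq\alpha_n\succeq\beta_n$ for all $m\geq n$. Hence
$$\Bnorm{x_{\alpha_m}-x_{\alpha_n}}<2^{-n}, \quad m\geq n,$$
so $(x_{\alpha_n})$ is a Cauchy sequence. By completeness it has a limit $x_0\in\Bsp$.

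Letting $m\to\infty$ in the display above gives $\Bnorm{x_0-x_{\alpha_n}}\leq 2^{-n}$.

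Now fix $\epso$ and choose $n$ with $2^{-n+1}<\veps$. For any $\alpha\succeq\alpha_n$ we have both $\alpha\succeq\beta_n$ and $\alpha_n\succeq\beta_n$ by transitivity. Therefore
$$\Bnorm{x_\alpha-x_0}\leq \Bnorm{x_\alpha-x_{\alpha_n}}+\Bnorm{x_{\alpha_n}-x_0}<2^{-n}+2^{-n}<\veps .$$
This is exactly convergence of the net to $x_0$ with threshold index $\alpha_n$.

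The only delicate point, and the main obstacle, is the construction of the monotone chain $(\alpha_n)$. The directed set need not be totally ordered or even antisymmetric, so one cannot take a "maximum" of the $\beta_n$. Instead one must use the upper-bound axiom (2) inductively and rely only on transitivity (1) to compare later indices with earlier ones. Once that chain is in place, everything else is the standard triangle-inequality argument. Uniqueness of the limit also follows from the triangle inequality, because the directedness of $I$ guarantees a common index beyond any two thresholds.
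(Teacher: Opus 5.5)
Your proof is correct and follows the paper's argument. You build a monotone chain $\alpha_{n+1}\succeq\alpha_n$ from the Cauchy-net condition with tolerances $2^{-n}$, use completeness to get a limit $x_0$ of $(x_{\alpha_n})$, and then transfer that limit back to the whole net. The only differences are cosmetic: the paper obtains the limit through the absolutely convergent telescoping series $\sum_n (x_{\alpha_n}-x_{\alpha_{n-1}})$ rather than directly from Cauchy sequences, and it only sketches the final transfer step that you carry out explicitly.
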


Note: it is well known that a Banach spaces is complete if and only if
every Cauchy sequence is convergent, or equivalently, if every absolutely
convergent series is convergent in $\BspN$. It is also clear, that any
Cauchy sequence is a Cauchy net (using the index set $\Nst$ with natural
ordering as index set). Thus it is clear that we only have to verify
that any Cauchy sequence $\xalI$ is convergent in $\BspN$. 
\begin{proof}
First we determine a sequence $\varepsilon_n$, e.g. $\varepsilon = 2^{-n}$
for $n \geq 1$, and --following the definition of a Cauchy net -- a sequence
$ \alpha_n$ such that $ \alpha,\beta \succeq \alpha_n$
\begin{equation}\label{alphan1}
\alpha, \beta \succeq  \alpha_n  \quad \Rightarrow \quad
\Bnorm{x_\alpha - x_\beta} < \varepsilon_n.
\end{equation}
Without loss of generality (due to the majorization property) we can
determine the sequence $\alpha_n$ inductively with $\alpha_{n+1} \succeq \alpha_n$. Formally we choose $x_{\alpha_0} = 0 \in \Bsp$.

The series $ \sum_{n \geq 1} \left( x_{\alpha_n} - x_{\alpha_{n-1}} \right ) $
is then absolutely convergent, because 
$$ \sum_{n \geq 1}  \Bnorm{ x_{\alpha_n} - x_{\alpha_{n-1}}} \leq
\sum_{n \geq 1} \varepsilon_n \leq 1 < \infty. $$
Hence the partial sums are 
$$  \sum_{n = 1}^N \left( x_{\alpha_n} - x_{\alpha_{n-1}} \right )  = x_{\alpha_N} \quad (!) $$
are convergent, i.e.\ there exists some $x_0 \in \Bsp$ with
$$ \lim_{n \to \infty} x_{\alpha_n} = x_0 \,\,\, \mbox{in} \,\, \BspN. $$
Invoking the initial Cauchy-net condition we complete the argument
by showing (once a limit has been identified) that we  have indeed 
$$ \tred{\limalinf \,\, x_\alpha = x_0 \,\, \mbox{in} \,\,  \BspN.}$$
\end{proof}

\begin{remark}
It should be noted as a delicate point that the convergent Cauchy-sequence
obtained in the proof does not have to be a {\it subnet} of the original
Cauchy-net, because the notation of a subnet (which we do not need here)
is more complex than just the idea of a subsequence of a given sequence.
At least, it does not just mean reducing the index set (which for sequences
has a natural order) to a subset of the original index set with strictly
increasing enumeration of the elements of the subsequence.
\end{remark}

{\tt Acknowledgement:}  During the preparation of this paper the author
has received support from the FWF project [I 3403] and the ANACRES
network funded by OEAD.

\bibliographystyle{abbrv}

\end{document}